\newcommand{\R}{\mathbb{R}}
\newcommand{\C}{\mathbb{C}}
\DeclareMathOperator{\sign}{sign}
\newcommand{\off}[1]{}
\begin{document}
\newtheorem{definition}{Definition}
\newtheorem{lemma}{Lemma}
\newtheorem{proposition}{Proposition}
\newtheorem{property}{Property}
\newtheorem{corollary}{Corollary}
\newtheorem{algorithm}{Algorithm}

\title{Revealing Stable and Unstable Modes of Generic Denoisers through Nonlinear Eigenvalue Analysis}

\author{Ester Hait-Fraenkel, and Guy Gilboa,~\IEEEmembership{Member,~IEEE,}
\thanks{Authors are with the Department of Electrical Engineering, Technion, Israel Institute of Technology, e-mail: etyhait@campus.technion.ac.il.}
\thanks{Manuscript received April 19, 2005; revised August 26, 2015.}}

\markboth{Journal of \LaTeX\ Class Files,~Vol.~14, No.~8, August~2015}%
{Shell \MakeLowercase{\textit{et al.}}: Bare Demo of IEEEtran.cls for IEEE Journals}

\maketitle

\begin{abstract}
In this paper, we propose to analyze stable and unstable modes of generic image denoisers through nonlinear eigenvalue analysis.
We attempt to find input images for which the output of a black-box denoiser is proportional to the input.
We treat this as a nonlinear eigenvalue problem. 
 This has potentially wide implications, since most image processing algorithms can be viewed as generic nonlinear operators. 
We introduce a generalized nonlinear power-method to solve eigenproblems for such black-box operators.
Using this method we reveal stable modes of nonlinear denoisers.
These modes are optimal inputs for the denoiser, achieving superior PSNR in noise removal. 
Analogously to the linear case (low-pass-filter), such stable modes are
eigenfunctions corresponding to large eigenvalues, characterized by large piece-wise-smooth structures. 
We also provide a method to generate the complementary, most unstable modes, which the denoiser suppresses strongly. These modes are textures with small eigenvalues.
We validate the method using total-variation (TV) and demonstrate it on the EPLL denoiser (Zoran-Weiss). 
Finally, we suggest an encryption-decryption application.
\end{abstract}

\begin{IEEEkeywords}
eigenfunctions, nonlinear operators, denoising, power iteration, total-variation, EPLL
\end{IEEEkeywords}

\IEEEpeerreviewmaketitle

\section{Introduction}
\IEEEPARstart{L}{inear} eigenvalue problems are a fundamental tool for theoretical analysis and understanding of linear operators, as well as for various engineering and scientific applications. Thus, extensive studies were dedicated to solving the linear eigenvalue problem, $Lu=\lambda u$, where $L$ is a square matrix of size $n \times n$, $u \in \C^{n \times 1}$ is an eigenvector and $\lambda\in \C$ is an eigenvalue.
Well-known methods for solving eigenvalue problems are the linear power iteration or power method~\cite{mises1929praktische} and the related inverse power method~\cite{golub1996matrix}.
 
Recent interest in nonlinear operators and their image processing applications has led to growing research of the generalized \textit{nonlinear} eigenproblem   
\begin{equation}
\label{eq:EV}
    T(u)=\lambda u,
\end{equation}
where $T(u):\mathbb{R}^{n \times 1}\rightarrow \mathbb{R}^{n \times 1}$ is a bounded non-linear operator in finite dimensions (throughout the paper we will restrict ourselves to the real setting, and will also often refer to eigenvectors as eigenfunctions.). So far, methods assumed $T(u)$ is induced by a convex, one-homogeneous functional $J(u)$, such that $T(u)$ is in the subgradient of $J(u)$: $T(u) \in \partial J(u)$.
Hein and Buhler~\cite{hein2010inverse} extended the inverse power method for non-linear homogeneous functionals (ratio of convex functionals), formulating the iterative scheme as an optimization problem. Nossek and Gilboa~\cite{nossek2018flows} suggested an eigenfunction-generating forward flow for operators $p(u)$, where $p(u) \in \partial J(u)$, with $J(u)$ being a convex, one-homogeneous functional. The evolving signal was smoothed in a series of convex minimization steps. Further theoretical analysis and general algorithms are shown in~\cite{feld2019rayleigh},\cite{aujol2018theoretical},\cite{bresson2013adaptive},\cite{bungert2019asymptotic},\cite{cohen2018energy}. Recently such algorithms are successfully used in semi-supervised learning, combining deep-nets and graph-based label extension \cite{aviles2019beyond}.

However, to the best of our knowledge, until now there has been no attempt to solve and analyze eigenproblems for \textbf{generic} operators. Such operators are very common in signal and image processing, since any nonlinear algorithm (e.g. denoising or deblurring), with same-sized input and output, can be seen as a generic nonlinear operator, with a discrete input $u \in \R^n$. With no analytic operator at hand, such operators are very complex to characterize, and can be treated as black-box operators. 

Nevertheless, such analysis is of great interest, as it reveals the stable and unstable modes of an image denoiser. The stable modes are the optimal or most-suitable inputs for the denoiser, achieving superior PSNR in noise removal. These are the eigenfunctions corresponding to large eigenvalues. On the contrary, the unstable modes are the least-suitable inputs, which are strongly suppressed. These are the eigenfunctions corresponding to small eigenvalues.

In this paper, we suggest a generalized method to solve and analyze eigenproblems for generic non-linear denoising operators, in order to reveal their stable and unstable modes. We show similar interpretations of linear concepts in the adapted generic nonlinear setting. For example, large eigenfunctions of denoisers, with large, highly stable structures, are equivalent to low frequency components for linear smoothing operators. We adapt the well known power iteration to generate eigenfunctions for these black-box operators. We establish theoretical requirements for convergence and show steady state properties of our framework, and provide analysis tools for validation. 
We showcase our results using the black-box generic EPLL denoiser~\cite{zoran2011learning}, including robustness properties.
The TV denoising~\cite{rudin1992nonlinear} is used for verification, being a well-studied, theoretically-established, functional-based method.
Finally, we suggest an application of a decryption-encryption scheme.
\subsubsection*{\textbf{Contributions and Novelties}}
\begin{itemize}
    \item \textbf{We formulate the question of optimal inputs for a denoiser as a nonlinear eigenvalue problem for generic, black-box operators.} We handle implicit image-processing algorithms, where no analytic operator is at hand. We do not restrict the discussion to functional-induced operators, such as in~\cite{hein2010inverse},\cite{nossek2018flows}. 
    \item \textbf{We propose an algorithm for finding eigenfunctions for such generic operators}. We generalize known linear concepts into a nonlinear framework. Specifically, we show that under a Lipschitz assumption of the operator, this method converges to an eigenfunction.
\end{itemize}
The rest of the paper is organized as follows. Sec. \ref{Sec::theory} presents the method and a theoretical analysis of its properties and behavior. It also shows validation measures, useful induced operators and a method to generate a series of eigenfunctions. Sec. \ref{Sec::res} shows experimental results for different operators and suggests an application. Sec. \ref{Sec::conc} concludes our work.

\section{Theory and Analysis}\label{Sec::theory}
In this section, we present the theory and analysis of the generalized power iteration algorithm as adapted for \textbf{generic, nonlinear} operators.
We denote the evolving signal as $u^k \in \mathbb{R}^{n \times 1}$, and the generic, nonlinear operator used as $T(u): \: \mathbb{R}^{n \times 1} \to \mathbb{R}^{n \times 1}$. $\|\cdot\|$ denotes the $\ell^2$-norm.

\subsection{The Linear Case}
We shortly display well-known basic concepts of the linear eigenproblem $Lu=\lambda u$, where $\lambda\in \mathbb{R}$ and $u \in \mathbb{R}^{n \times 1}$ are the eigenvalue and eigenfunction, respectively, of the linear operator $L\in \mathbb{R}^{n \times n}$. These concepts will be generalized and extended in our work for generic non-linear problems. 

An elementary, widely used method to solve this problem is the \emph{linear power iteration}~\cite{mises1929praktische}. In this method, an initial guess $f$ is gradually evolved into the eigenfunction corresponding to a large eigenvalue, by iteratively applying the operator and normalizing the result: 
\begin{equation}
\label{eq:LinPM}
  u^{k+1}=\frac{Lu^k}{\|Lu^k\|},  
\end{equation}
initialized with $u^0=f$, $k=1,2,...$. 
For linear operators, its convergence to an eigenfunction can be easily shown (e.g.~\cite{trefethen1997numerical}), with a convergence rate depending on the ratio between the two largest eigenvalues. 

Power iteration extensions allow finding additional eigenfunctions. First, to generate a small eigenvalue, power iterations of \eqref{eq:LinPM} with an alternative operator, $L^\dagger \equiv I-\alpha L$, ~\cite{pohlhausen1921berechnung} yield its large eigenvector, which is a small one for $L$  (also see~\cite{golub1996matrix}).
Second, deflation or projection methods (e.g.~\cite{wilkinson1965algebraic}) generate \textit{more} eigenpairs, relying on previously found eigenvectors of larger eigenvalues. In projection methods, the signal is iteratively projected to the space orthogonal to the known eigenvectors. The Arnoldi iteration~\cite{arnoldi1951principle} uses the Gram-Schmidt process to extract an orthonormal basis, approximating a few large eigenvectors.  

Finally, the well-known Rayleigh quotient~\cite{horn2012matrix} is defined for real symmetric matrices
as:
\begin{equation}
    \label{eq:Lin_R}
    R_{lin}(u):=\frac{u^T L u}{u^T  u}= \frac{\langle u,Lu\rangle}{\|u\|^2}.
\end{equation}
Its Euler-Lagrange equation yields the eigenvalue problem, thus any eigenvector is a critical point of the Rayleigh quotient. Moreover, the Rayleigh quotient can be understood as a generalized or approximated eigenvalue computed for any $u$ (not just eigenvectors). For eigenvectors, the Rayleigh quotient is exactly the corresponding eigenvalue $\lambda$. These methods and measures will be later generalized into the nonlinear setting.

\subsection{Method: Nonlinear Generic Power Iteration} \label{Sec::method}
We adapt the \textit{linear} power iteration to
generate eigenfunctions for \textit{non-linear, generic} operators. An initial signal (in our case, image) $f$ evolves using the power iteration method to generate the process $\{u^k\}$:
\begin{algorithm}{Power iterations with a generic operator $T(\cdot)$.}
\label{AlgPower}
\begin{enumerate}
    \item Initialization: $u^0 \gets f/\|f\|$, $\,\,k \gets 1.$
    \item Repeat until $k=K$ or $\|u^{k+1}-u^k\|< \varepsilon$:\\ 
    $u^{k+1} \gets \sign(\langle u^k,T(u^k)\rangle)\frac{T(u^k)}{\|T(u^k)\|}$, $\,\,\,k \gets k+1.$
\end{enumerate}
\end{algorithm}
For power iterations to be well defined, we assume that $\forall k$, $0 < \|T(u^k)\| < \infty$, $T(u^k) \ne 0$, $\langle u^k,T(u^k)\rangle \ne 0$.
The result depends on several factors: the initial condition $f$, the number of iterations, and the operator $T(u)$ and its different parameters (e.g. patch distribution model, estimated noise or estimated blur kernel). We demonstrate these factors (Sec. \ref{Sec::res}) and even exploit them for our decryption-encryption application. Though our analysis addresses Algorithm \ref{AlgPower}, in practice we use a slightly modified version. First, we prevent evolution to a trivial constant eigenfunction ($\lambda=\lambda_{max}=1$ for denoisers) by removing the mean value, such that the signal is of zero mean but of constant variance, preventing a loss of contrast. Second, to handle operators with a desired value range, we modify the normalization stage.
\begin{algorithm}{Power iterations for non-trivial solutions.}
\label{AlgPower2}
\begin{enumerate}
    \item Initialization: $f^0 \gets f^0-\overline{f^0}$, $\,\,k \gets 1.$
    \item Repeat until $k=K$ or $\|u^{k+1}-u^k\|< \varepsilon$:\\ 
    $u^{k+1} \gets T(u^k)$\\
    $u^{k+1} \gets u^{k+1}-\overline{u^{k+1}}$\\
    $u^{k+1} \gets \frac{u^{k+1}}{\|u^{k+1}\|}\|f^0\|$, $\,\,\,k \gets k+1.$
\end{enumerate}
\end{algorithm}

\subsection{Steady State Properties}
We analyze the steady state behavior of the process $\{u^k\}$: the relation between convergence and reaching an eigenfunction, and eigenvalue characteristics. Note that at this point, we make minimal assumptions regrading the nature of $T(u)$.
Thus, our observations can be applied to any generic, black-box operator. General mild assumptions, valid for any reasonable image-processing algorithm, are the existence and non-triviality of $T(u)$ $\forall u$ (e.g. mapping any function to a constant).
The existence of eigenfunctions is a broad topic and cannot be proved for any general operator. However, as we show here, convergence immediately implies existence.
\begin{lemma}\label{lemma}
$\forall k, \|u^k\|=1$.
\end{lemma}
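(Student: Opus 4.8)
The plan is to prove this by induction on $k$, since Algorithm~\ref{AlgPower} explicitly normalizes at every step. The statement is essentially a bookkeeping fact about the update rule, so the proof should be short.

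First I would establish the base case $k=0$. By the initialization in step (1), $u^0 = f/\|f\|$, and since we implicitly assume $f \neq 0$ for the power iteration to be well defined, $\|u^0\| = \|f\|/\|f\| = 1$.

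Next I would handle the inductive step. Assume $\|u^k\| = 1$. The update is $u^{k+1} = \sign(\langle u^k, T(u^k)\rangle)\, T(u^k)/\|T(u^k)\|$. Taking norms, $\|u^{k+1}\| = |\sign(\langle u^k, T(u^k)\rangle)| \cdot \|T(u^k)\|/\|T(u^k)\|$. By the standing assumptions stated just before the lemma, $0 < \|T(u^k)\| < \infty$ (so the division is legitimate) and $\langle u^k, T(u^k)\rangle \neq 0$ (so the sign factor equals $\pm 1$ and has absolute value $1$). Hence $\|u^{k+1}\| = 1$, closing the induction. Note the inductive hypothesis $\|u^k\|=1$ is not actually needed in the step — each iterate is normalized from scratch — but phrasing it as an induction keeps the argument uniform.

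There is essentially no obstacle here; the only thing to be careful about is citing the well-definedness assumptions ($T(u^k)\neq 0$, $\langle u^k,T(u^k)\rangle\neq 0$, $\|T(u^k)\|$ finite and positive) so that every operation in the update is valid. The lemma is stated for Algorithm~\ref{AlgPower}; the analogous claim $\|u^k\| = \|f^0\|$ for Algorithm~\ref{AlgPower2} follows identically from its final rescaling step, though the text only asserts it for the unit-norm version.
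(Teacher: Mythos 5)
Your proof is correct and is essentially the same as the paper's: both simply observe that every iterate (including $u^0$) is explicitly normalized, so its norm is $1$, with the sign factor contributing only $\pm 1$. The induction wrapper is harmless but, as you note yourself, unnecessary since the inductive hypothesis is never used.
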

\begin{proof}
Trivial, as each iteration is normalized by the norm:
\\
$\|u^k\|^2=\langle u^k,u^k \rangle = \langle \sign(\langle u^{k-1},T(u^{k-1})\rangle)\frac{T(u^{k-1})}{\|T(u^{k-1})\|},
\\
\sign(\langle u^{k-1},T(u^{k-1})\rangle)\frac{T(u^{k-1})}{\|T(u^{k-1})\|} \rangle = 1.$
\end{proof}

\begin{proposition}\label{prop1}
Algorithm \ref{AlgPower} converges after a finite number of steps: $\exists N$ s.t. $\forall k>N,$ $u^{k+1}=u^k$, \textbf{if and only if} $u^k$ solves the eigenproblem \eqref{eq:EV}.
\end{proposition}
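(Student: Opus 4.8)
The plan is to prove the biconditional by establishing each direction separately, working directly from the update rule in Algorithm~\ref{AlgPower}. Throughout I would write $s^k := \sign(\langle u^k, T(u^k)\rangle) \in \{+1,-1\}$ and recall from Lemma~\ref{lemma} that every iterate has unit norm, so all the quantities $\|T(u^k)\|$ and the inner products are well defined and nonzero by the standing assumptions on $T$.

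For the ``only if'' direction, suppose convergence occurs in the finite sense stated: there is $N$ with $u^{k+1}=u^k$ for all $k>N$. Fix any such $k$ and simply unfold the update: $u^k = u^{k+1} = s^k \, T(u^k)/\|T(u^k)\|$. Rearranging gives $T(u^k) = s^k \|T(u^k)\| \, u^k$, which is exactly $T(u^k)=\lambda u^k$ with $\lambda = s^k \|T(u^k)\|$ a real scalar. Hence $u^k$ solves \eqref{eq:EV}. (One can note in passing that $\|\lambda\|=\|T(u^k)\|$ and $\sign(\lambda)=s^k$, which connects to the eigenvalue-characterization discussion, but that is not needed for the statement.)

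For the ``if'' direction, suppose at some step $u^k$ solves the eigenproblem, i.e.\ $T(u^k)=\lambda u^k$ for some $\lambda \in \mathbb{R}$; by the well-definedness assumptions $\lambda \neq 0$ and $\|u^k\|=1$ forces $\|T(u^k)\| = \|\lambda\| \neq 0$, and $\langle u^k, T(u^k)\rangle = \lambda \neq 0$. Now compute the next iterate: $\|T(u^k)\| = \|\lambda\|$, $\sign(\langle u^k,T(u^k)\rangle) = \sign(\lambda)$, so
\begin{equation*}
u^{k+1} = \sign(\lambda)\,\frac{\lambda u^k}{\|\lambda\|} = \sign(\lambda)\,\sign(\lambda)\, u^k = u^k.
\end{equation*}
Thus the iteration is stationary from step $k$ onward: taking $N=k-1$ (or $N=k$) gives $u^{j+1}=u^j$ for all $j>N$, which is the asserted finite-step convergence. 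A small bookkeeping point is that once $u^{k+1}=u^k$, applying $T$ again gives the same eigenrelation, so stationarity propagates for free — this should be stated explicitly so the ``$\forall k>N$'' quantifier is genuinely covered rather than just the single step.

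I do not expect a serious obstacle here: the proposition is essentially a restatement of the update rule, and both directions are one-line algebraic manipulations once the $\sign$ factor is handled carefully. The only place to be attentive is the role of the standing non-degeneracy assumptions ($T(u^k)\neq 0$, $\langle u^k,T(u^k)\rangle\neq 0$, $\|T(u^k)\|$ finite and positive): these are what make the power-iteration map well defined at an eigenfunction and what ensure the recovered $\lambda$ is a genuine (nonzero, finite) eigenvalue, so I would flag that the eigenfunctions under consideration are implicitly those with nonzero eigenvalue and nonzero correlation with their image — consistent with the paper's earlier remark that the trivial/degenerate cases are excluded.
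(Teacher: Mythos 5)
Your proof is correct and follows essentially the same route as the paper's: unfold the update rule to read off $\lambda=\sign(\langle u^k,T(u^k)\rangle)\|T(u^k)\|$ in one direction, and use $\sign(\langle u^k,T(u^k)\rangle)=\sign(\lambda)$ together with Lemma~\ref{lemma} to show stationarity in the other. Your added remarks on propagating stationarity to all $k>N$ and on the role of the non-degeneracy assumptions (excluding $\lambda=0$) are sensible refinements the paper leaves implicit, but the argument is the same.
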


\begin{proof}
First, if $\exists N$ such that $\forall k>N,$ $u^{k+1}=u^k$, then $u^k=u^{k+1}=\sign(\langle u^k,T(u^k)\rangle)\frac{T(u^k)}{\|T(u^k)\|}$.
Thus, $T(u^k)=\sign(\langle u^k,T(u^k)\rangle)\|T(u^k)\|u^k = \lambda u^k$, where 
$\lambda = \sign(\langle u^k,T(u^k)\rangle)\|T(u^k)\|$.

Second, if some iterate $u^k$ admits \eqref{eq:EV}, then by taking the inner product with $u^k$ on both sides we reach $\sign(\langle u^k,T(u^k)\rangle)=\sign(\lambda)$.
Using the definition of $u^{k+1}$ and Lemma \ref{lemma} we get $u^{k+1}=\sign(\langle u^k,T(u^k)\rangle)\frac{T(u^k)}{\|T(u^k)\|}=$
$\sign(\lambda)\frac{\lambda u^k}{\|\lambda u^k\|}=u^k$. Thus, the process converges. 
\end{proof}

Several reasonable assumptions can be made regarding the nature of $T$:
\begin{enumerate}
    \item If $T$ is a denoising (coarsening) operator, then $\forall k,$ $\|u^k\|\geq\|T(u^k)\|$. \label{assump1}
    \item If $T$ is a deblurring (sharpening) operator, then $\forall k,$ $\|u^k\|\leq\|T(u^k)\|$. \label{assump2}
    \item $\forall k,$ $\langle u^k,T(u^k)\rangle \geq 0$. Positive correlation between input and processed input (typical in denoising). \label{assump3}
    \item $\forall k,$ $\langle u^k,T(u^k)\rangle \leq 0$.  Negative correlation (rare). \label{assump4}
\end{enumerate}
\begin{corollary}{Eigenvalue range.}
\label{cor_lam}
If $u^k$ is a converged solution of Algorithm \ref{AlgPower}, that is, $T(u^k)=\lambda u^k$, then the following holds w.r.t. the eigenvalue $\lambda$:
\begin{enumerate}
\item $\lambda=\sign(\langle u^k,T(u^k)\rangle)\|T(u^k)\|$.
\item If assumptions \ref{assump1}, \ref{assump3} hold, then $0 \leq\lambda\leq 1$. If assumptions \ref{assump1}, \ref{assump4} hold, then $-1 \leq\lambda\leq 0$. That is, for a denoising operator, $|\lambda|\leq 1$.
    \item If assumptions \ref{assump2}, \ref{assump3} hold, then $\lambda\geq 1$. If assumptions \ref{assump2}, \ref{assump4} hold, then $\lambda\leq -1$. That is, for a deblurring operator, $|\lambda|\geq 1$.
\end{enumerate}
\end{corollary}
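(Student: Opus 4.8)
The plan is to derive all three claims from two elementary identities satisfied by a converged iterate $u^k$ with $T(u^k)=\lambda u^k$, obtained by pairing the eigenrelation with $u^k$ and by taking $\ell^2$-norms, using Lemma \ref{lemma} throughout to exploit $\|u^k\|=1$.

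For item 1, I would take the inner product of $T(u^k)=\lambda u^k$ with $u^k$, giving $\langle u^k,T(u^k)\rangle=\lambda\|u^k\|^2=\lambda$ by Lemma \ref{lemma}; hence $\sign(\langle u^k,T(u^k)\rangle)=\sign(\lambda)$ (the degenerate case $\langle u^k,T(u^k)\rangle=0$ is excluded by the standing well-definedness assumption on Algorithm \ref{AlgPower}, so the sign is unambiguous). Next I would take norms on both sides of the eigenrelation, $\|T(u^k)\|=|\lambda|\,\|u^k\|=|\lambda|$. Multiplying the two, $\sign(\langle u^k,T(u^k)\rangle)\,\|T(u^k)\|=\sign(\lambda)\,|\lambda|=\lambda$, which is the asserted formula; this also matches the expression for $\lambda$ already produced in the proof of Proposition \ref{prop1}, so one may alternatively just quote it from there.

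For items 2 and 3, I would simply combine the two identities $\lambda=\langle u^k,T(u^k)\rangle$ and $|\lambda|=\|T(u^k)\|$ with the relevant assumptions. Assumption \ref{assump1} (denoising/coarsening) gives $\|T(u^k)\|\le\|u^k\|=1$, i.e. $|\lambda|\le1$, while assumption \ref{assump2} (deblurring/sharpening) gives $\|T(u^k)\|\ge1$, i.e. $|\lambda|\ge1$. The sign of $\lambda$ is then pinned down by the correlation assumption: assumption \ref{assump3} yields $\lambda=\langle u^k,T(u^k)\rangle\ge0$ and assumption \ref{assump4} yields $\lambda\le0$. Intersecting the magnitude bound with the sign constraint produces the four cases $0\le\lambda\le1$, $-1\le\lambda\le0$, $\lambda\ge1$, $\lambda\le-1$ respectively, and in particular $|\lambda|\le1$ for any denoiser and $|\lambda|\ge1$ for any deblurrer.

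I do not expect a real obstacle here: the argument is a short chain of inequalities. The only point requiring care is to invoke Lemma \ref{lemma} consistently so that the normalization $\|u^k\|=1$ is used in both the inner-product identity and the norm identity, and to keep the bookkeeping of signs straight when intersecting $|\lambda|\le1$ (or $|\lambda|\ge1$) with $\lambda\ge0$ (or $\lambda\le0$).
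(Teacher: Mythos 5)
Your proposal is correct and follows essentially the same route as the paper: it combines Lemma \ref{lemma} with the norm bound from assumption \ref{assump1} or \ref{assump2} to control $|\lambda|$, and uses the correlation assumption \ref{assump3} or \ref{assump4} to fix the sign, exactly as the paper does. The only cosmetic difference is that you re-derive item (1) directly from the eigenrelation via $\lambda=\langle u^k,T(u^k)\rangle$ and $|\lambda|=\|T(u^k)\|$, whereas the paper simply quotes the identity already established in the proof of Proposition \ref{prop1} — as you yourself note, these are interchangeable.
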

The above can be easily verified using the proof of Proposition \ref{prop1}, Lemma \ref{lemma} and the respective assumptions.
For example, to show the first assertion of point (2) above: Assumption 1 and Lemma \ref{lemma} yield $1=\|u\| \ge \|T(u)\|=|\lambda|$, whereas from point (1) above, proved in the proposition, and Assumption 3 we have: $\sign(\lambda)=\sign(\langle u^k,T(u^k)\rangle) \ge 0$, thus we conclude
$0 \le \lambda \le 1$.

\begin{figure}
\captionsetup[subfigure]{justification=centering}
\centering
\begin{subfigure}{0.5\textwidth}
    \centering
    \includegraphics[width=0.95\textwidth]{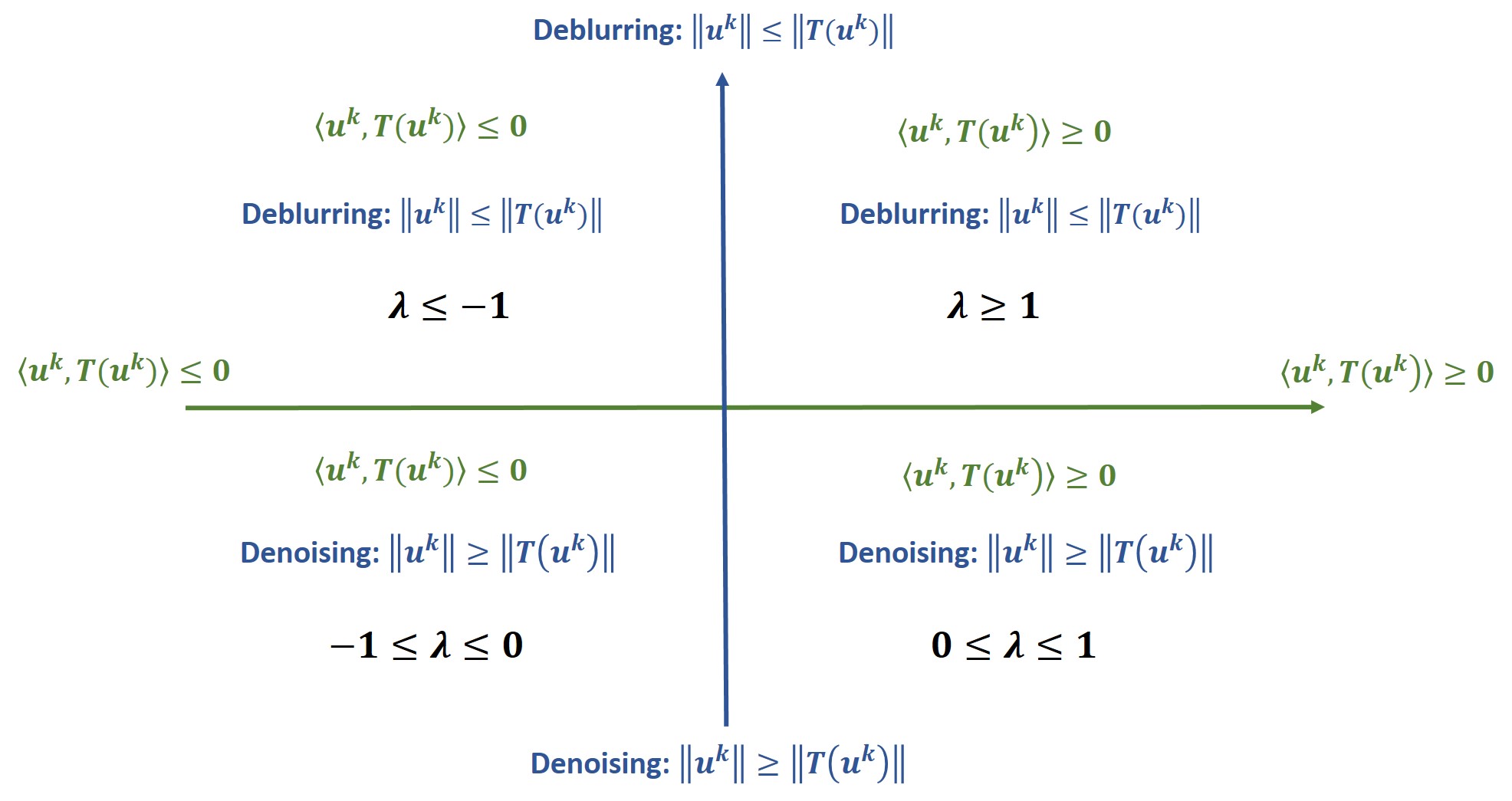}
    \caption{Corollary \ref{cor_lam}: eigenvalue sign and magnitude}
\end{subfigure}
\\
\begin{subfigure}{0.4\textwidth}
    \centering
    \includegraphics[width=0.4\textwidth]{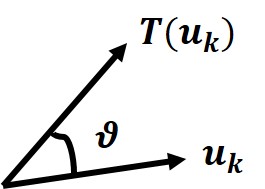}
    \caption{Definition \ref{def2}: angle between $u^k$ and $T(u^k)$}
    \label{Fig::angle}
\end{subfigure}
\caption{Illustrations of Corollary \ref{cor_lam}, Definition \ref{def2}.}
\end{figure}

%
%

\subsection{Eigenfunction Validation Measures}\label{Sec::measures}
Since the suggested method for eigenfunction generation lacks any analytic solutions, we are obliged to verify our results. Thus, we propose several easy to calculate validation measures, in order to verify the convergence of our method to eigenfunctions (up to some minor error). We discuss their behavior throughout the power iteration and in steady state, and show their usefulness for validating that a specific signal is an eigenfunction.

{\bf Point-wise validation and visualization.}
At the risk of stating the obvious, the most fundamental validation measure for an eigenfunction $u$ is that it must admit: $T(u)=\lambda u$, for some constant $\lambda \in \mathbb{R}$. Specifically for images (which are our focus here), each pixel $(i,j)$ must hold $\lambda = T(u)_{ij}/u_{ij}$.
Thus, when visualizing the ratio $T(u)/u$, we should ideally obtain a constant image. This visualization can also point out image regions of less accuracy, e.g. near zero values, where the denominator of the measure is less stable.
It should also hold for a specific image row/ column. This is obviously also a simple way of finding the corresponding eigenvalue.

{\bf Global indicators.} First, we generalize the Rayleigh quotient~\cite{horn2012matrix} for non-linear operators:
\begin{definition}\label{def1}
The Rayleigh quotient of $u^k$ is defined as
\begin{equation}
\label{eq:R}
 R(u^k)=\frac{\langle u^k,T(u^k) \rangle}{\|u^k\|^2}.   
\end{equation}
\end{definition}
The measure was also defined and investigated for nonlinear operators induced by a one-homogeneous functional $J(u)$~\cite{nossek2018flows} as follows: $R(u)=\frac{J(u)}{\|u\|^2}$.
The definition of \eqref{eq:R} generalizes \cite{nossek2018flows}, since for one-homogeneous functionals $J(u)=\langle u, p(u) \rangle$, where $p(u)$ is a subgradient element of $J(u)$, and $T(u)=p(u)$. It also naturally generalizes the linear case \eqref{eq:Lin_R}, by setting $T(u)=L(u)$.
\begin{proposition} \label{prop3}
$\forall k,$ $|R(u^k)| \leq \|T(u^k)\|$. Equality holds \textbf{if and only if} $u^k$ is an eigenfunction, admitting \eqref{eq:EV}.
\end{proposition}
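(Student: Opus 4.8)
The plan is to reduce the statement to the Cauchy--Schwarz inequality together with Lemma~\ref{lemma}. First I would use Lemma~\ref{lemma} to note that $\|u^k\|=1$ for every $k$, so that the Rayleigh quotient simplifies to $R(u^k)=\langle u^k, T(u^k)\rangle$. Then the bound $|R(u^k)| \le \|T(u^k)\|$ is immediate from $|\langle u^k, T(u^k)\rangle| \le \|u^k\|\,\|T(u^k)\| = \|T(u^k)\|$.

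For the equality case, I would invoke the equality condition in Cauchy--Schwarz: $|\langle u^k, T(u^k)\rangle| = \|u^k\|\,\|T(u^k)\|$ holds if and only if $u^k$ and $T(u^k)$ are linearly dependent. Since $\|u^k\|=1 \neq 0$ and (by the standing assumptions on the power iteration) $T(u^k)\neq 0$, linear dependence forces $T(u^k) = \lambda u^k$ for some scalar $\lambda \in \mathbb{R}$, i.e.\ $u^k$ solves the eigenproblem \eqref{eq:EV}. Conversely, if $T(u^k)=\lambda u^k$, then $R(u^k) = \langle u^k, \lambda u^k\rangle = \lambda$ and $\|T(u^k)\| = |\lambda|\,\|u^k\| = |\lambda|$, so $|R(u^k)| = \|T(u^k)\|$, closing the equivalence.

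There is essentially no hard step here: the only thing to be mildly careful about is the degenerate situation where $T(u^k)=0$ or $u^k=0$, in which Cauchy--Schwarz is an equality but there is no meaningful eigenpair; both are excluded by the well-posedness assumptions already imposed on Algorithm~\ref{AlgPower} ($0<\|T(u^k)\|<\infty$ and $\|u^k\|=1$). I would state this explicitly so the ``if and only if'' is clean. Optionally, I would also remark that this proposition gives a practical, cheap convergence diagnostic: the gap $\|T(u^k)\| - |R(u^k)|$ vanishes exactly on eigenfunctions and can be monitored along the iteration.
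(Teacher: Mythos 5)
Your proof is correct and follows essentially the same route as the paper's: apply Lemma~\ref{lemma} to reduce $|R(u^k)|$ to $|\langle u^k,T(u^k)\rangle|$, bound it by Cauchy--Schwarz, and use the equality condition of Cauchy--Schwarz to characterize eigenfunctions. Your extra care about the degenerate case $T(u^k)=0$ and the explicit verification of the converse are welcome refinements, but they do not change the argument.
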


\begin{proof}
Using Lemma \ref{lemma} and the Cauchy-Schwarz inequality:

$|R(u^k)|=|\frac{\langle u^k,T(u^k)\rangle)}{\|u^k\|^2}|=|\langle u^k,T(u^k)\rangle)| \leq \|u^k\|\|T(u^k)\|=\|T(u^k)\|$.\\
The Cauchy-Schwarz then holds in equality if and only if $u^k$ and $T(u^k)$ are linearly dependent, that is, $u^k$ is a solution of the eigenproblem \eqref{eq:EV}.
\end{proof}
\begin{proposition}\label{prop5}
Suppose that exactly at iteration $k=N$ Algorithm \ref{AlgPower} converged.  
Then $\forall k<N,$ $|R(u^k)|<\|T(u^k)\|,$ $\forall k\geq N,$ $|R(u^k)|=\|T(u^N)\|$.
\end{proposition}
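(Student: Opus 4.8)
The plan is to derive everything from Proposition \ref{prop3} together with Proposition \ref{prop1}. Recall that Proposition \ref{prop3} gives, for every $k$, the bound $|R(u^k)|\le\|T(u^k)\|$, and — crucially — that equality holds \emph{if and only if} $u^k$ is an eigenfunction, i.e. solves \eqref{eq:EV}. So the whole statement reduces to identifying which iterates are eigenfunctions: none for $k<N$, and all (and in fact all equal to $u^N$) for $k\ge N$.

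First I would handle the steady-state part $k\ge N$. By hypothesis the algorithm has converged at $k=N$, i.e. $u^{k+1}=u^k$ for all $k\ge N$; in particular $u^{N+1}=u^N$, so by the first implication in the proof of Proposition \ref{prop1} the iterate $u^N$ solves \eqref{eq:EV}. A trivial induction using $u^{k+1}=u^k$ then gives $u^k=u^N$ for every $k\ge N$, and each such iterate is an eigenfunction. Applying the equality case of Proposition \ref{prop3} to each of them yields $|R(u^k)|=\|T(u^k)\|$, and since $u^k=u^N$ this equals $\|T(u^N)\|$, which is exactly the claimed value.

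Next I would treat $k<N$. The phrase ``exactly at iteration $k=N$'' must be read as: $N$ is the \emph{first} index at which the sequence stabilizes; equivalently, $u^{k+1}\ne u^k$ for every $k<N$. Indeed, if $u^{k+1}=u^k$ held for some $k<N$, then by the same argument used in the proof of Proposition \ref{prop1} the iterate $u^k$ would already solve \eqref{eq:EV}, and the process would have converged no later than step $k$, contradicting the minimality of $N$. Hence for $k<N$ the iterate $u^k$ is not an eigenfunction, and the strict part of Proposition \ref{prop3} gives $|R(u^k)|<\|T(u^k)\|$.

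The argument is essentially bookkeeping layered on top of Propositions \ref{prop1} and \ref{prop3}; the only point requiring a little care — and the closest thing to an obstacle — is making the reading of ``converged exactly at $N$'' precise enough to justify the strict inequality for $k<N$, i.e. ruling out an accidental fixed point occurring before $N$. That is precisely what the equivalence ``$u^{k+1}=u^k \iff u^k$ solves \eqref{eq:EV}'' from Proposition \ref{prop1} supplies, so no genuinely new work is needed.
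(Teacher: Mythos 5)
Your proof is correct and follows exactly the route the paper intends: the paper's own proof of this proposition is the one-line remark that it ``follows directly from Propositions \ref{prop1} and \ref{prop3},'' and your argument simply spells out that bookkeeping, including the correct reading of ``converged exactly at $N$'' needed for the strict inequality when $k<N$.
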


\begin{proof}
Follows directly from Propositions \ref{prop1} and \ref{prop3}.
%
\end{proof}
Since the algorithm converges when the Rayleigh quotient stabilizes, its stabilization is a good indication for convergence. Then the value of the Rayleigh quotient is also the eigenvalue. In practice, for the operators tested, the Rayleigh quotient monotonically increases to the eigenvalue (see Sec. \ref{Sec::res}).

Another global measure is the angle between $u^k$ and $T(u^k)$ (Fig. \ref{Fig::angle}), inspired by a similar definition in~\cite{nossek2018flows} for their subgradient-induced operator.  

\begin{definition}\label{def2}
The cosine of the angle between $u^k$ and $T(u^k)$ is defined as
\begin{equation}
\label{eq:cos_theta}
 cos\theta=\frac{\langle u^k,T(u^k)\rangle)}{\|u^k\|\|T(u^k)\|}.
\end{equation}
\end{definition}

\begin{proposition}\label{prop4}
The angle between $u^k$ and $T(u^k)$ is $\pi n, n \in Z$ \textbf{if and only if} $u^k$ solves the eigenproblem \eqref{eq:EV}.
\end{proposition}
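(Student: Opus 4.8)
The plan is to establish the biconditional in Proposition \ref{prop4} by connecting the angle condition to the equality case in Cauchy--Schwarz, which Proposition \ref{prop3} already ties to the eigenproblem. First I would unwind the statement: saying the angle between $u^k$ and $T(u^k)$ is $\pi n$ for some integer $n$ is exactly saying $\cos\theta = \pm 1$, i.e. $|\cos\theta| = 1$, where $\cos\theta$ is the quantity in Definition \ref{def2}. So the proposition reduces to showing $|\cos\theta| = 1$ if and only if $u^k$ solves \eqref{eq:EV}.

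For the forward direction, I would suppose $|\cos\theta| = 1$. By Lemma \ref{lemma} we have $\|u^k\| = 1$, so $\cos\theta = \langle u^k, T(u^k)\rangle / \|T(u^k)\|$ and the condition becomes $|\langle u^k, T(u^k)\rangle| = \|u^k\|\,\|T(u^k)\|$. This is precisely the equality case of the Cauchy--Schwarz inequality, which forces $u^k$ and $T(u^k)$ to be linearly dependent; since $\|u^k\| = 1 \ne 0$, this means $T(u^k) = \lambda u^k$ for $\lambda = \langle u^k, T(u^k)\rangle / \|u^k\|^2$, so $u^k$ solves \eqref{eq:EV}. Alternatively, I could simply invoke Proposition \ref{prop3}: $|\cos\theta| = 1$ is equivalent to $|R(u^k)| = \|T(u^k)\|$ after multiplying through by $\|T(u^k)\|$ and using Lemma \ref{lemma}, and Proposition \ref{prop3} already identifies that equality with being an eigenfunction. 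For the reverse direction, if $T(u^k) = \lambda u^k$, substitute directly into \eqref{eq:cos_theta}: the numerator is $\lambda \|u^k\|^2$ and the denominator is $\|u^k\|\cdot|\lambda|\,\|u^k\| = |\lambda|\,\|u^k\|^2$, so $\cos\theta = \lambda/|\lambda| = \sign(\lambda) = \pm 1$, corresponding to angle $0$ or $\pi$, both of the form $\pi n$.

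A minor subtlety I would want to address is the well-definedness assumptions: the angle and $\cos\theta$ only make sense when $\|T(u^k)\| \ne 0$ and, for $u^k$ to be a nontrivial eigenfunction, $\lambda \ne 0$; these are already guaranteed by the standing assumptions on Algorithm \ref{AlgPower} ($0 < \|T(u^k)\| < \infty$, $\langle u^k, T(u^k)\rangle \ne 0$), so in the reverse direction $\lambda \ne 0$ and $\lambda/|\lambda|$ is legitimate. I do not anticipate a genuine obstacle here — the result is essentially a restatement of the Cauchy--Schwarz equality condition already exploited in Proposition \ref{prop3}, so the main "work" is just bookkeeping: translating "angle is $\pi n$" into "$|\cos\theta| = 1$" and then either quoting Proposition \ref{prop3} or redoing the short Cauchy--Schwarz argument. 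The cleanest writeup simply cites Propositions \ref{prop1} and \ref{prop3} and notes that $\cos\theta = \pm 1 \iff |\langle u^k, T(u^k)\rangle| = \|u^k\|\|T(u^k)\|$.
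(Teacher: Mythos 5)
Your proposal is correct and follows essentially the same route as the paper's proof: both directions are handled by reducing the angle condition to $|\cos\theta|=1$, invoking the Cauchy--Schwarz equality case (via Proposition \ref{prop3}) for one implication and direct substitution of $T(u^k)=\lambda u^k$ for the other. Your extra remark on well-definedness ($\|T(u^k)\|\ne 0$, $\lambda\ne 0$) is a minor refinement the paper leaves to its standing assumptions.
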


\begin{proof}
First, \eqref{eq:EV} means that $cos\theta=\frac{\langle u^k,T(u^k)\rangle)}{\|u^k\|\|T(u^k)\|}=\frac{\langle u^k,\lambda u^k\rangle)}{\|u^k\|\|\lambda u^k\|}=\pm 1$.

Second, suppose that the angle between $u^k$ and $T(u^k)$ is $\pi n$. Then $\pm 1=cos \theta=\frac{\langle u^k,T(u^k)\rangle}{\|u^k\|\|T(u^k)\|} \underset{Lemma \; \ref{lemma}} \implies$ $\langle u^k,T(u^k)\rangle=\pm \|T(u^k)\| \implies$ $|\langle u^k,T(u^k)\rangle|= \|T(u^k)\|$. From Proposition \ref{prop3}, this only holds if $u^k$ and $T(u^k)$ are co-linear. Thus, \eqref{eq:EV} holds with $\lambda=\pm \|T(u^k)\|$.
\end{proof}

\subsection{Contraction Operators}
So far we have discussed the steady state behavior of the power iteration for a very generic $T(u)$, making very few assumptions regrading its nature. We will now follow previous work (e.g.~\cite{daskalakis2018converse},
~\cite{wu2017markov},~\cite{chu2014second},~\cite{kolda2011shifted}) treating the power iteration process as a fixed-point process (note, that applying the fixed-point iteration for functionals has been investigated before~\cite{browder1966solution}). We will also make the reasonable assumption that $T(u)$ is a \textit{contraction operator} (defined hereafter). Under this assumption we prove the \textit{convergence} of this process.
In Sec. \ref{Sec::res} we test if our nonlinear operators indeed behave as contraction operators. We will show that while the stronger condition does not hold, a weaker but sufficient condition does hold, and thus this assumption is valid.

We write the power iteration process $\{u^k\}$ as the fixed-point process $\{g^k\}$:

\begin{algorithm}{Power Iteration as a Fixed-point Process.}
\label{Algfix}
\begin{enumerate}
    \item Initialization: $g^0 \gets f^0$, $\|f^0\|=1$, $\,\,k \gets 1.$
    \item Repeat until $k=K$ or $\|g^{k+1}-g^k\|< \varepsilon$:\\ 
    $g(u^k) \equiv u^{k+1} \gets \sign(\langle u^k,T(u^k)\rangle)\frac{T(u^k)}{\|T(u^k)\|}$, $\,\,\,k \gets k+1.$
\end{enumerate}
\end{algorithm}

\begin{proposition}\label{prop_Lipschitz}
The fixed-point iteration process $\{g^k\}$ converges, that is, $\lim_{k\to\infty} \|u^k-u^{k-1}\|=0$, if $g$ is a contraction operator, that is, if $\forall x,y,$ there exists $L<1$, such that $g$ admits the following Lipschitz continuity property:
$ \|g(x)-g(y)\| \leq L\|x-y\| $.
\end{proposition}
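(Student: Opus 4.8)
The plan is to recognize Proposition~\ref{prop_Lipschitz} as a direct instance of the Banach fixed-point (contraction mapping) theorem, adapted to the setting where the iterates live on the unit sphere. First I would observe that by Lemma~\ref{lemma} the sequence $\{u^k\}$ lies in the unit sphere $S = \{x \in \mathbb{R}^n : \|x\| = 1\}$, which is a closed (hence complete) subset of $\mathbb{R}^n$, and that $g$ maps $S$ into $S$ (again by Lemma~\ref{lemma}, since $g(u^k) = u^{k+1}$ is normalized). So $g$ is a self-map of a complete metric space satisfying $\|g(x) - g(y)\| \le L\|x-y\|$ with $L < 1$.

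The key step is the standard telescoping/geometric estimate. Writing $u^{k+1} = g(u^k)$, one has by the contraction property
\begin{equation}
\label{eq:contr_est}
\|u^{k+1} - u^k\| = \|g(u^k) - g(u^{k-1})\| \le L \|u^k - u^{k-1}\| \le \cdots \le L^k \|u^1 - u^0\|.
\end{equation}
Since $0 \le L < 1$, the right-hand side of~\eqref{eq:contr_est} tends to $0$ as $k \to \infty$, which is exactly the claimed conclusion $\lim_{k\to\infty}\|u^k - u^{k-1}\| = 0$. If one wants the stronger statement that $\{u^k\}$ actually converges to a limit $u^*$ (which by Proposition~\ref{prop1} would then be an eigenfunction), I would add the Cauchy estimate $\|u^{k+m} - u^k\| \le \sum_{j=0}^{m-1} L^{k+j}\|u^1-u^0\| \le \frac{L^k}{1-L}\|u^1 - u^0\|$, so $\{u^k\}$ is Cauchy in the complete space $S$ and hence converges; continuity of $g$ (which follows from the Lipschitz bound) then gives $g(u^*) = u^*$.

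The main obstacle is not in the mechanics — the argument is textbook — but in the hypothesis itself: whether $g$ genuinely is a contraction is a strong and non-obvious assumption, since $g$ involves normalization and a sign factor applied to the black-box operator $T$, and the sign factor in particular can be discontinuous where $\langle u, T(u)\rangle$ changes sign. I would therefore be careful to state the proposition as conditional (''if $g$ is a contraction''), keep the proof to the clean Banach-type estimate above, and defer the empirical question of whether the tested denoisers satisfy this (or a weaker sufficient) condition to the experimental section, as the text already signals.
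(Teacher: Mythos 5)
Your proof is correct and follows essentially the same route as the paper's: iterate the contraction bound to obtain $\|u^k-u^{k-1}\|\le L^{k-1}\|u^1-u^0\|\to 0$. The extra observations you add (completeness of the unit sphere, the Cauchy estimate giving an actual limit, and the caveat about the sign factor's possible discontinuity) go beyond what the paper writes but do not change the core argument.
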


\begin{proof}
This follows the Banach fixed point theorem. Following the Lipschitz continuity assumption, $\exists L<1$ such that the following statements hold:

$\|u^2-u^1\| \equiv \|g(u^1)-g(u^0)\| \leq L\|u^1-u^0\|$

$\|u^3-u^2\| \equiv \|g(u^2)-g(u^1)\| \leq L\|u^2-u^1\|\leq L^2\|u^1-u^0\|$ \; ...

$\|u^k-u^{k-1}\| \equiv \|g(u^{k-1})-g(u^{k-2})\| \leq L\|u^{k-1}-u^{k-2}\| \leq L^{k-1}\|u^1-u^0\|$

%
Now, for $L<1,$ 
$\lim_{k\to\infty} \|u^k-u^{k-1}\| \leq \lim_{k\to\infty} L^{k-1} \|u^1-u^0\|=0$, hence $\lim_{k\to\infty} \|u^k-u^{k-1}\|=0$.  
\end{proof}



\subsection{Induced Operators}
We present a useful concept of operators induced by a given \textbf{generic, non-linear} operator, which allow generating eigenfunctions with different characteristics than those of the given operator.
Most importantly, the complementary operator defined here easily allows generating eigenfunctions corresponding to the \textit{small} eigenvalue of the given operator.   

\begin{definition}{Complementary operator.}
The complementary operator corresponding to the generic operator $T(u)$ is defined as: $T^\dagger(u) \equiv u-T(u)$, such that $T+T^\dagger=I$, where $I$ is the identity operator.
\end{definition}
\begin{property}\label{prop_ortho_op}
If $u$ is an eigenfunction of $T(u)$ with an eigenvalue $\lambda$, then $u$ is also an eigenfunction of $T^\dagger(u)$ with an eigenvalue $(1-\lambda)$.
\end{property}
Since $\{u,\lambda\}$ are an eigenpair of $T(u)$ we have $T(u)=\lambda u$. Thus: $T^\dagger(u)\equiv u-T(u)=u-\lambda u=(1-\lambda)u$. Therefore, $\{u,(1-\lambda)\}$ are an eigenpair of $T^\dagger(u)$.
From Property \ref{prop_ortho_op}, the following useful property immediately follows:
\begin{property}\label{prop_large_small}
The eigenfunction corresponding to the large eigenvalue of $T^\dagger(u)$ is the eigenfunction corresponding to the small eigenvalue of $T(u)$.
\end{property}
A useful algorithmic property thus results. Since Algorithm \ref{AlgPower} generates an eigenfunction with the maximal possible eigenvalue (as seen numerically), applying it using $T^\dagger$ will generate an eigenfunction of $T$ with the \textit{minimal} possible eigenvalue. This is a natural extension of the linear case.

Remark 1. For $T$ with $\lambda \in [0,1]$ (typical for coarsening (denoising) operators), we get $\lambda \in [0,1]$ also for $T^\dagger$.

Remark 2. For $[0,\lambda_{max}]$ with $\lambda_{max}>1$, we can attain positive eigenvalues for $T^\dagger$ using the following variant: $T^\dagger := I - \alpha T$, where $\alpha \le \frac{1}{\lambda_{max}}$. 


We focus on denoising operators, designed to remove noise or simplify the image by removing fine-scale details, thus refer to $T^{\dagger}$ as a \emph{texture generator}.

We can also define an \emph{enhancing operator} by adding the textural part, weighted by $\alpha$:
    $T^E_\alpha(u) \equiv u+\alpha T^{\dagger}(u)=u(1+\alpha)-\alpha T(u), \textrm{   where  } \alpha>0.$
In this case, if $u$ is an eigenfunction of $T(u)$ with an eigenvalue $\lambda$, then $u$ is also an eigenfunction of $T^E_\alpha(u)$, with an eigenvalue $(1+\alpha-\alpha\lambda) \ge 1$.

\off{
We now define induced operators for coarsening operators (which are our main interest in this work), which will be demonstrated in Sec. \ref{Sec::res}.
\begin{definition}
The complementary operator corresponding to the \textbf{generic coarsening (denoising)} operator $T(u)$ is the texture generator operator.
\end{definition}
\begin{definition}
$\forall k$, the noise corresponding to the \textbf{generic coarsening} operator $T(u)$ is defined as: $r^k \equiv u^k-T(u^k)$.
\end{definition}
In other words, the large eigenfunction of a texture generator, characterized by textures and noise, is the small eigenfunction of the denoiser.

The following operator also generates textural, noise-like eigenfunctions.
\begin{definition}
$\forall k$, the enhancing operator corresponding to the \textbf{generic coarsening} operator $T(u)$, is defined as:

$T^E_\alpha(u^k) \equiv u^k+\alpha r^k=u^k(1+\alpha)-\alpha T(u^k)$, where $0<\alpha<1$.
\end{definition}
\begin{property}
If $u$ is an eigenfunction of $T(u)$ with an eigenvalue $\lambda$, then $u$ is an eigenfunction of $T^E_\alpha(u)$ with an eigenvalue $(1+\alpha-\alpha\lambda)>1$.
\end{property}
\begin{proof}
$u,\lambda$ are an eigenpair of $T(u)$, meaning: $T(u)=\lambda u$. Thus: $T^E_\alpha(u) \equiv u(1+\alpha)-\alpha T(u)=u(1+\alpha)-\alpha\lambda u=(1+\alpha-\alpha\lambda)u$. Thus, $u,(1+\alpha-\alpha\lambda)$ are an eigenpair of $T^E_\alpha(u)$. Now, since $\lambda<1$ (eigenvalue of a denoiser), then $1+\alpha-\alpha\lambda=1+\alpha(1-\lambda)>1$, since $\alpha>0, 1-\lambda>0$.
\end{proof}
\begin{property}
If $u$ is an eigenfunction of $T^E_\alpha(u)$ with an eigenvalue $\lambda$, then $u$ is an eigenfunction of $T(u)$ with an eigenvalue $(\frac{1+\alpha-\lambda}{\alpha})<1$.
\end{property}
\begin{proof}
$u,\lambda$ are an eigenpair of $T^E_\alpha(u)$, meaning: $T^E_\alpha(u)=\lambda u$. Thus: $T^E_\alpha(u) \equiv u(1+\alpha)-\alpha T(u)=\lambda u$. Thus, $T(u)=(\frac{1+\alpha-\lambda}{\alpha})u$. Thus, $u,(\frac{1+\alpha-\lambda}{\alpha})$ are an eigenpair of $T(u)$. Now, since $\lambda>1$ (eigenvalue of a enhancer), then $\frac{1+\alpha-\lambda}{\alpha}=1+\frac{1-\lambda}{\alpha}<1$, since $\alpha>0, 1-\lambda<0$.
\end{proof}
}

\subsection{Generating More Eigenfunctions}\label{sec::more_ef}
As mentioned before, Algorithm \ref{AlgPower} generates a single eigenfunction (given $f^0$) with $\lambda$ very close to 1.
Additionally, as discussed following Property \ref{prop_large_small}, applying Algorithm \ref{AlgPower} with $T^\dagger$ yields an eigenfunction of $T$ with a small eigenvalue. We now address obtaining additional eigenfunctions stemming from $f^0$, following projections methods in the linear case.

\begin{definition}

Let $\{v_i\}^N_{i=1}$ be an orthonormal set of eigenfunctions of $T$, that is, each $v_i$ admits \eqref{eq:EV} and in addition, $\forall i,j, \langle v_i,v_j \rangle = \delta_{ij}$.
Then a \textbf{single projection} of $f$ onto the space \textbf{orthogonal} to $\{v_i\}^N_{i=1}$ is defined as: 
$f_N=f-\sum_{i=1}^N \langle f,v_i \rangle v_i$.
\end{definition}

\begin{property}
The single projection $f_N$ is orthogonal to the orthonormal set $\{v_i\}^N_{i=1}$: $\forall 1 \leq j \leq N, \langle v_j, f_N \rangle = 0$.
\end{property}
\begin{proof}
$\langle v_j, f_N \rangle = \langle v_j, f \rangle-\sum_{i=1}^N \langle f,v_i \rangle \langle v_i,v_j \rangle \underset{Orthonormality} = \langle v_j, f \rangle-\sum_{i=1}^N \langle f,v_i \rangle \delta_{ij}$ 
$\underset{\sum \neq 0 \; only \; for \; i=j} = \langle v_j, f \rangle-\langle f,v_j \rangle\ =0$.
\end{proof}


Now, to attain $v_{N+1}$, we initialize the process with $f^N$, a single projection of $f^0$. We then iteratively apply $T$ and perform a single projection:

\begin{algorithm}{Generating More Eigenfunctions.}
\label{AlgMore}
\begin{enumerate}
    \item Initialization: $f^N$, $\,\,k \gets 1.$
    \item Repeat until $k=K$ or $\|u^{k+1}-u^k\|< \varepsilon$:\\ 
    $z^k \gets T(u^k)$;\\
    $y^k \gets $ single projection of $z^k$;\\
    $u^{k+1} \gets \frac{y_k}{\|y_k\|}$, $\,\,\,k \gets k+1.$
\end{enumerate}
\end{algorithm}

We note that existing eigenfunctions may be non-orthogonal. In this case, projection should be done iteratively: $f_1 = \frac{\langle f,v_1 \rangle v_1}{\|v_1\|^2}$,
$f_2 = \frac{\langle f_1,v_2 \rangle v_2}{\|v_2\|^2}$,
and so on. 

It can be easily seen that when the process reaches an eigenfunction, it converges, assuming orthogonality of previous eigenfunctions, and the result is also orthogonal to previous ones.
In practice, the \textit{function} generated is orthogonal to the set, though we cannot guarantee this. However, it may be considered only as a \textit{pseudo}-eigenfunction, as it may not hold $T(u)=\lambda u$. Nevertheless, we can generate an eigenfunction admitting \eqref{eq:EV} by applying more power iterations without projections, at the expense of orthogonality. 
This agrees with recent TV and one-homogeneous functionals~\cite{benning2013,burger2016spectral,bungert2019nonlinear} theory, where eigenfunctions are not necessarily orthogonal.

\off{
All resulting eigenfunctions $\{v_i\}^{N+1}_{i=1}$ are orthonormal (shown empirically): 
\begin{property}\label{prop_ortho_eigs}
$\forall 1 \leq i,j \leq N+1, \langle v_i,v_j \rangle = \delta_{ij}\|v_j\|^2$.
\end{property}
}

\section{Experimental Results and Applications}\label{Sec::res}
In this section, we present experimental results for generating eigenfunctions of different generic, non-linear operators, both denoisers and, apparently for the first time, non-convex deblurring operators. We first validate our method using the well-established TV denoiser. We then show results for the black-box generic non-linear EPLL denoiser, and suggest an application of a decryption-encryption scheme. For both denoisers, we also show eigenfunction degradation robustness and decay profiles. Last, we demonstrate our method for the TV and EPLL deblurring operators.

We denote the degraded image, the restored image, the eigenvalue and the blurring kernel as $f$, $u$, $\lambda$ and $A$, respectively. 
When formulating optimization problems, $\eta>0$ is a fixed weight between the fidelity and prior terms. $x$ is a pixel in image domain $\Omega$. We empirically determine the number of iterations, such that the result admits the eigenfunction validation measures.

\subsection{Validation: TV Denoising Operator}
We show results for the well-established non-linear TV denoiser~\cite{rudin1992nonlinear}, which can be formulated as the following optimization problem:
\begin{equation}\label{Eq::TV}
    \underset{u}{\text{min}} \ \frac{\eta}{2} \|f-u\|^2 + \int_{\Omega} |\nabla u(x)| dx,
\end{equation}
This section serves two purposes. Applying our method to a well-studied, analytic operator serves as a proof of concept for our method, as we compare our results to known ones. We also present first results on the TV texture generator (eigenfunctions with small eigenvalues). 

To apply our method to the TV denoiser, we first test (Fig. \ref{Fig::contraction_TV}) whether TV behaves as a contraction operator. While the stronger Lipschitz condition $L_k<1$ ($L_k=\frac{\|u^k-u^{k-1}\|}{\|u^{k-1}-u^{k-2}\|}$) does not hold numerically $\forall k$, a weaker but sufficient condition holds: $\lim_{k\to\infty} \prod_{i=1}^{k} L_i=0$. Thus, the process converges, yielding an eigenfunction (Propositions \ref{prop_Lipschitz}, \ref{prop1}).

\begin{figure}
\captionsetup[subfigure]{justification=centering}
\centering
\begin{subfigure}{0.48\textwidth}
\centering
    \includegraphics[width=0.6\textwidth]{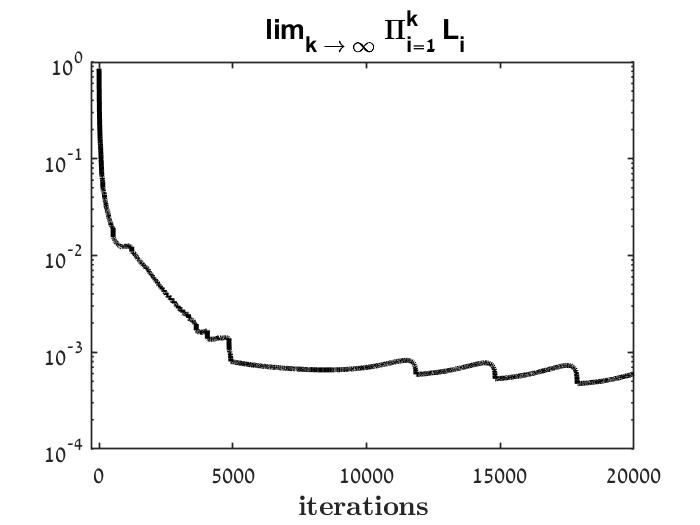}
    \caption{Weak condition holds for TV}
    \label{Fig::contraction_TV}
\end{subfigure}
\begin{subfigure}{0.48\textwidth}
\centering
    \includegraphics[width=0.6\textwidth]{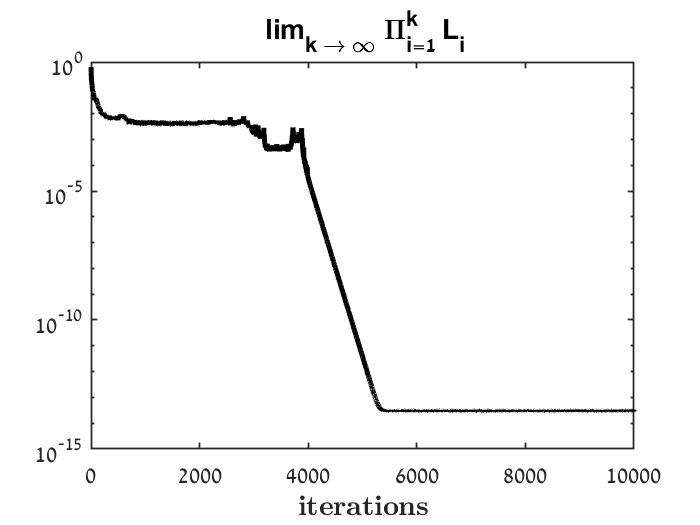}
    \caption{Weak condition holds for EPLL}
    \label{Fig::contraction_EPLL}
\end{subfigure}
\caption{TV and EPLL hold a weak condition for being contraction operators: $\lim_{k\to\infty} \prod_{i=1}^{k} L_i=0$ ($L_k=\frac{\|u^k-u^{k-1}\|}{\|u^{k-1}-u^{k-2}\|}$), thus the process converges (Proposition \ref{prop_Lipschitz}).}
\label{Fig::contraction}
\end{figure}

Fig. \ref{Fig::TV} 
shows the power iteration evolution of an initial image to the final eigenfunction. Note that the eigenvalue is smaller than but very close to 1, as expected from a detail-attenuating operator, and in accordance with Corollary \ref{cor_lam}. The eigenfunction represents the coarse structure of the initial image, and its shape is in accordance with the convex nature of TV eigenfunctions~\cite{gilboa2018beyond}. We also validate (Sec. \ref{Sec::measures}) that this is an eigenfunction. Note that the specific method of discretization can affect the eigenfunction structure (we use~\cite{chambolle2004}). 
Fig. \ref{Fig::decay_TV1}-\ref{Fig::decay_TV2} 
shows the eigenfunction decay when the denoiser is iteratively applied: for $98\%$ of pixels, decay profiles exhibit a distinct pattern, consistent with the theory of TV eigenfunctions~\cite{burger2016spectral} as analyzed in the context of decay profiles in~\cite{katzir}.

\begin{figure*}
\captionsetup[subfigure]{justification=centering}
\centering
\begin{subfigure}{0.155\textwidth}
\centering
    \includegraphics[width=0.9\textwidth]{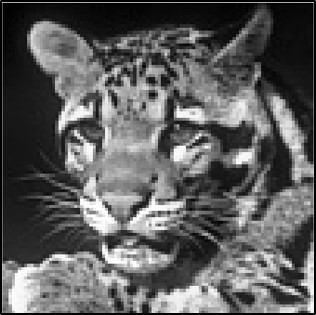}
    \caption{Initial image}
\end{subfigure}
\begin{subfigure}{0.155\textwidth}
\centering
    \includegraphics[width=0.9\textwidth]{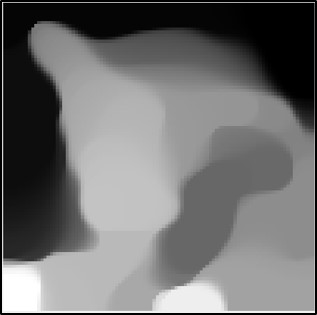}
    \caption{100 iterations}
\end{subfigure}
\begin{subfigure}{0.155\textwidth}
\centering
    \includegraphics[width=0.9\textwidth]{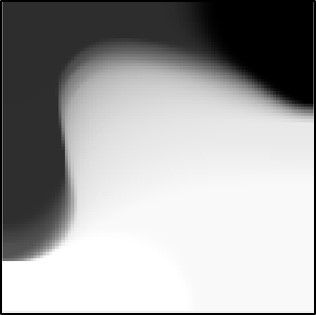}
    \caption{500 iterations}
\end{subfigure}
\begin{subfigure}{0.155\textwidth}
\centering
    \includegraphics[width=0.9\textwidth]{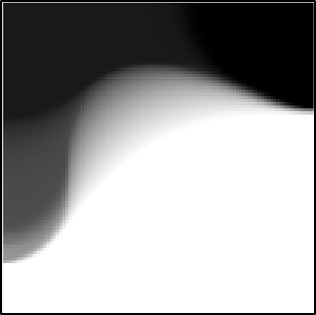}
    \caption{1000 iterations}
\end{subfigure}
\begin{subfigure}{0.155\textwidth}
\centering
    \includegraphics[width=0.9\textwidth]{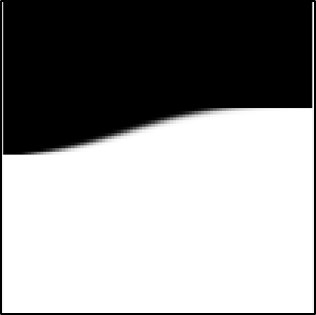}
    \caption{5000 iterations}
\end{subfigure}
\begin{subfigure}{0.155\textwidth}
\centering
    \includegraphics[width=0.9\textwidth]{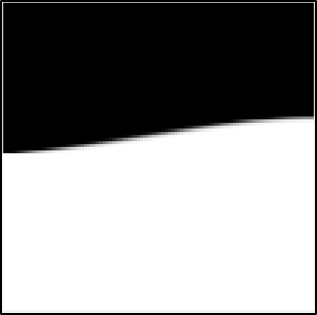}
    \caption{eigenfunction}
\end{subfigure}
\begin{subfigure}{0.24\textwidth}
\centering
    \includegraphics[height=2cm]{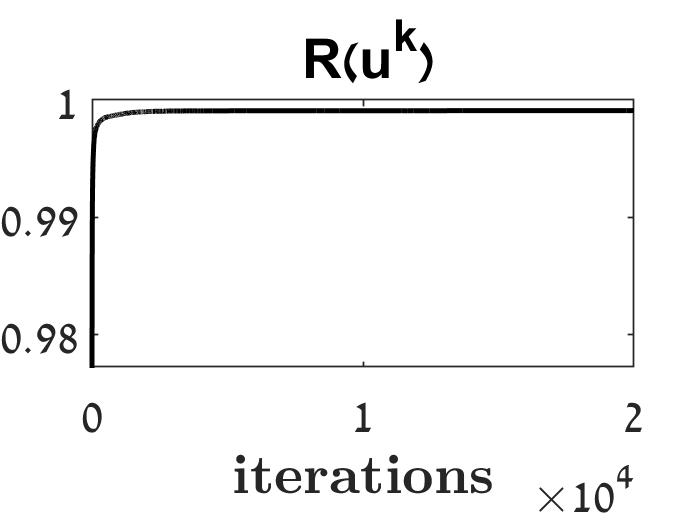}
    \caption{Rayleigh quotient $\rightarrow \lambda$}
\end{subfigure}
\begin{subfigure}{0.24\textwidth}
\centering
    \includegraphics[height=2cm]{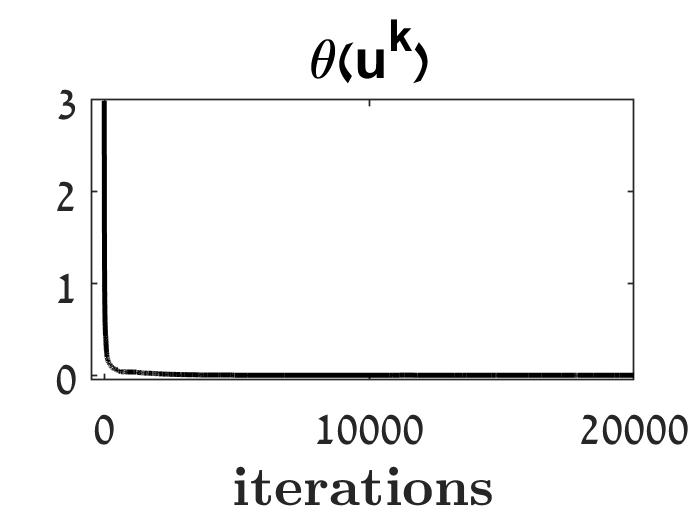}
    \caption{$\theta$ between $u^k, T(u^k) \rightarrow 0^{\circ}$}
\end{subfigure}
\begin{subfigure}{0.25\textwidth}
\centering
    \includegraphics[height=2cm]{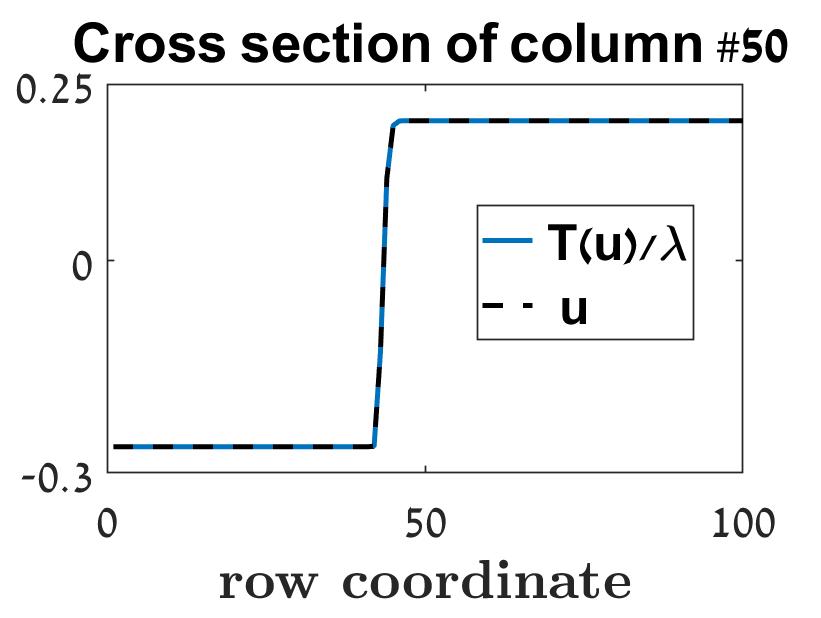}
    \caption{$\frac{T(u)}{\lambda}, u$: Identical cross sections}
\end{subfigure}
\begin{subfigure}{0.24\textwidth}
\centering
    \includegraphics[height=2cm]{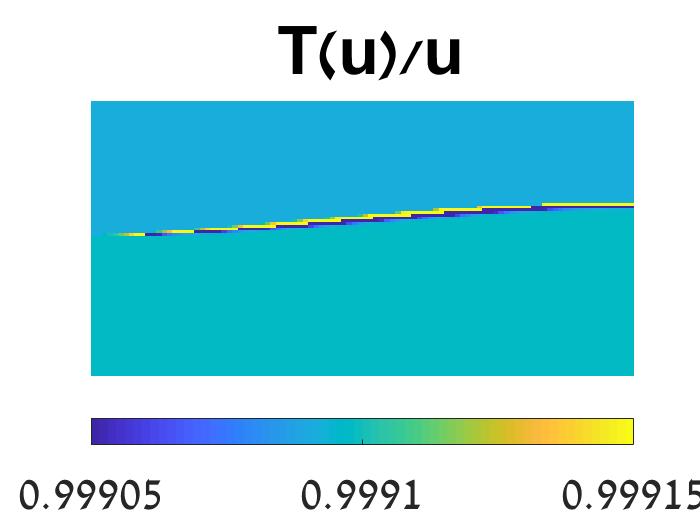}
    \caption{Constant Values of $\frac{T(u)}{u}$}
\end{subfigure}
\caption{TV power iteration evolution to final eigenfunction $u$ (20,000 iterations) with $\lambda=0.9991$. We validate this is an eigenfunction: the Rayleigh quotient increases to the eigenvalue (in accordance with Propositions \ref{prop3}, \ref{prop5}), and the angle between $u^k, T(u^k)$ decreases to zero (in accordance with Proposition \ref{prop4}). Cross sections of $\frac{T(u)}{\lambda}, u$ are identical, and $\frac{T(u)}{u}=C$ ($98\%$ of values are in the displayed range).}
\label{Fig::TV}
\end{figure*}

\begin{figure*}
\captionsetup[subfigure]{justification=centering}
\centering
\begin{subfigure}{0.18\textwidth}
\centering
    \includegraphics[height=2.1cm]{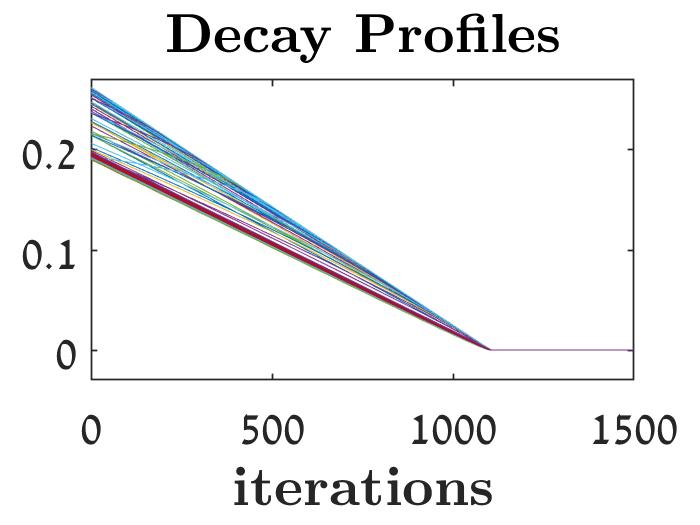}
    \caption{\\TV, unnormalized}
    \label{Fig::decay_TV1}
\end{subfigure}
\begin{subfigure}{0.18\textwidth}
\centering
    \includegraphics[height=2.1cm]{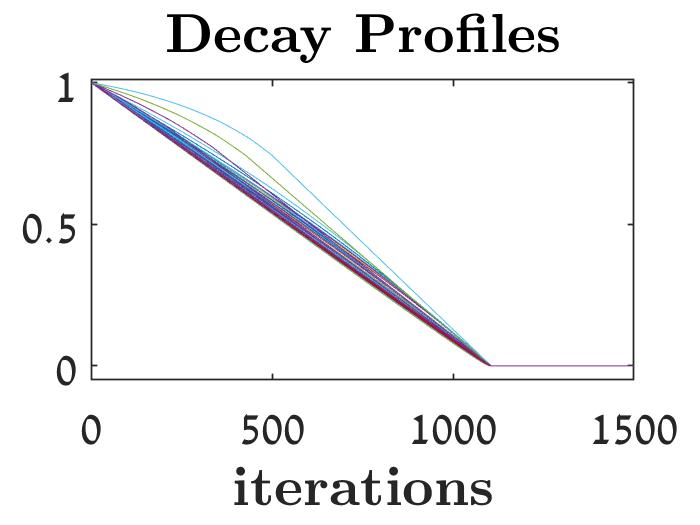}
    \caption{\\TV, normalized}
        \label{Fig::decay_TV2}
\end{subfigure}
\begin{subfigure}{0.18\textwidth}
\centering
    \includegraphics[height=2.1cm]{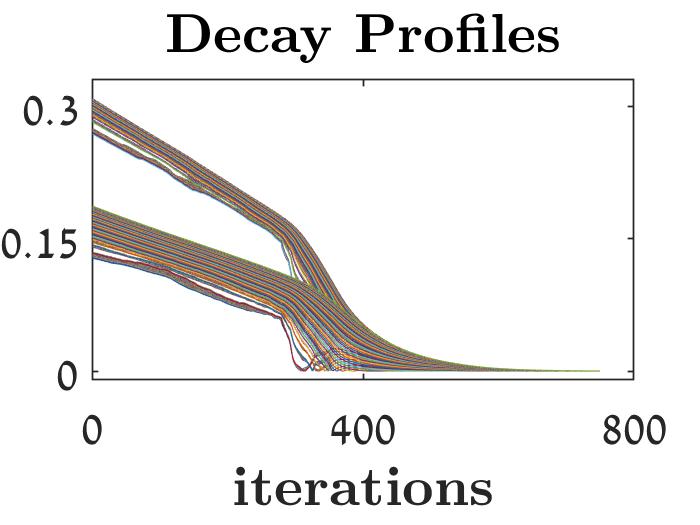}
    \caption{\\EPLL, unnormalized}
     \label{Fig::decay_EPLL1}
\end{subfigure}
\begin{subfigure}{0.18\textwidth}
\centering
    \includegraphics[height=2.1cm]{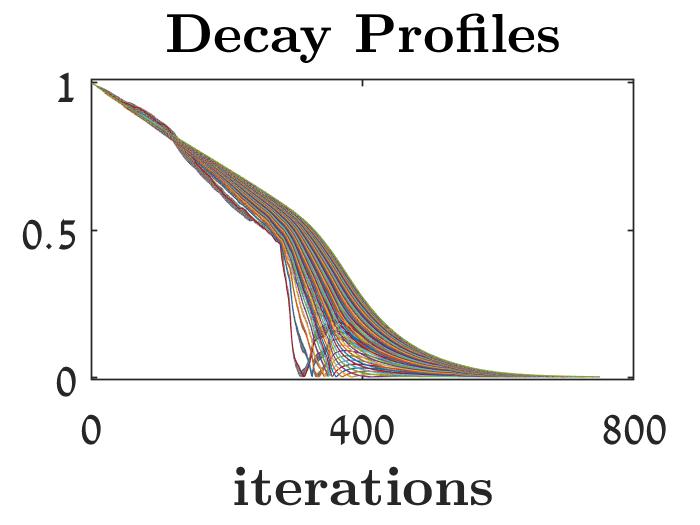}
    \caption{\\EPLL, normalized}
    \label{Fig::decay_EPLL2}
\end{subfigure}
\begin{subfigure}{0.23\textwidth}
\centering
    \includegraphics[height=2.1cm]{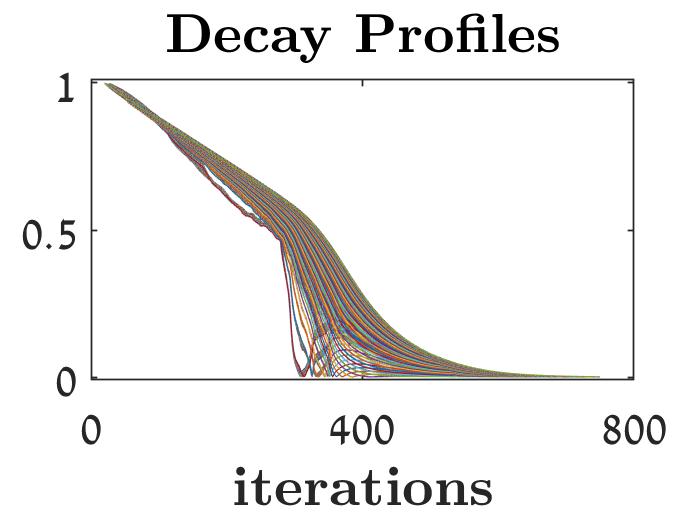}
    \caption{EPLL under noise: truncated and normalized}
    \label{Fig::decay_noise}
\end{subfigure}
\caption{Decay profiles (per pixel) of eigenfunctions, when the corresponding denoiser is iteratively applied. (a)-(b): TV decay profiles show a distinct pattern, consistent with TV theory. (c)-(d): EPLL decay profiles also show a distinct pattern (though there is no EPLL theory for comparison). (e): We truncate the distorted beginning of EPLL decay profiles of a degraded eigenfunction (noise, $\sigma=0.1$), which results in similar profiles.}
\end{figure*}
Last, we present the induced TV texture generator and generate its eigenfunction (Fig. \ref{Fig::TV_noise_gen}), 
which is with an eigenvalue of 1. This is also the small eigenfunction of the TV denoiser with eigenvalue 0 (see Property \ref{prop_large_small}). Indeed TV denoising \eqref{Eq::TV} can yield a solution $u=0$, when TV of $u^k$ is high enough (depending on $\eta$)~\cite{Meyer} (see a generalization for arbitrary 1-homogeneous functionals in~\cite{bungert2019solution}). Indeed the final result represents the texture of the initial image. Again, we validate this is an eigenfunction.

\begin{figure*}
\captionsetup[subfigure]{justification=centering}
\centering
\begin{subfigure}{0.18\textwidth}
\centering
    \includegraphics[width=0.8\textwidth]{Figs/tiger.jpg}
    \caption{Initial image}
\end{subfigure}
\begin{subfigure}{0.18\textwidth}
\centering
    \includegraphics[width=0.8\textwidth]{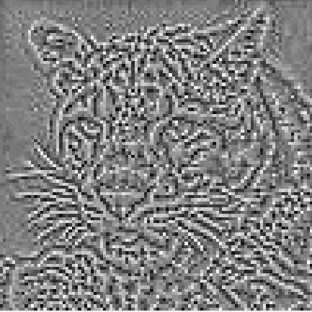}
    \caption{eigenfunction}
\end{subfigure}
\begin{subfigure}{0.2\textwidth}
\centering
    \includegraphics[height=2.1cm]{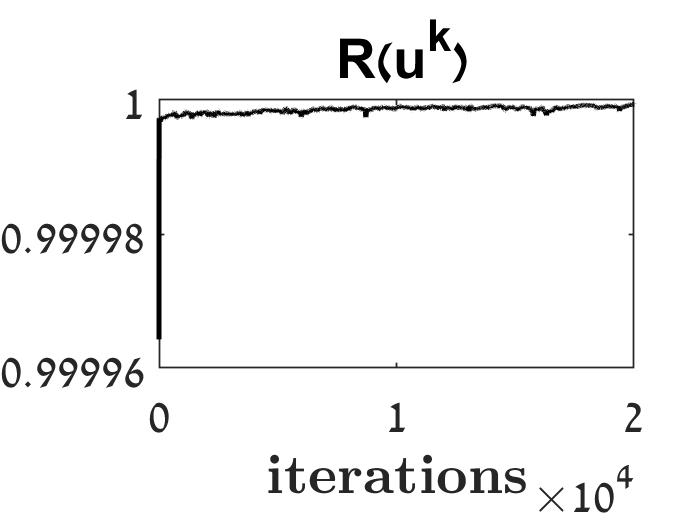}
    \caption{Rayleigh quotient $\rightarrow \lambda$}
\end{subfigure}
\begin{subfigure}{0.2\textwidth}
\centering
    \includegraphics[height=2.1cm]{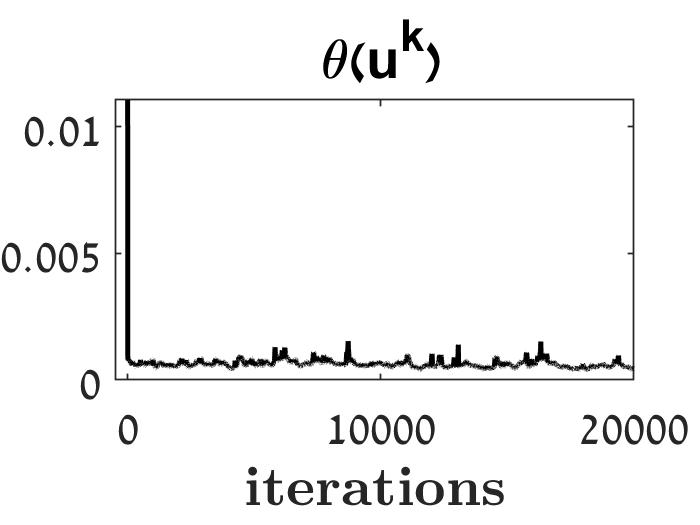}
    \caption{$\theta$ between $u^k, T(u^k) \rightarrow 0^{\circ}$}
\end{subfigure}
\begin{subfigure}{0.2\textwidth}
\centering
    \includegraphics[height=2.1cm]{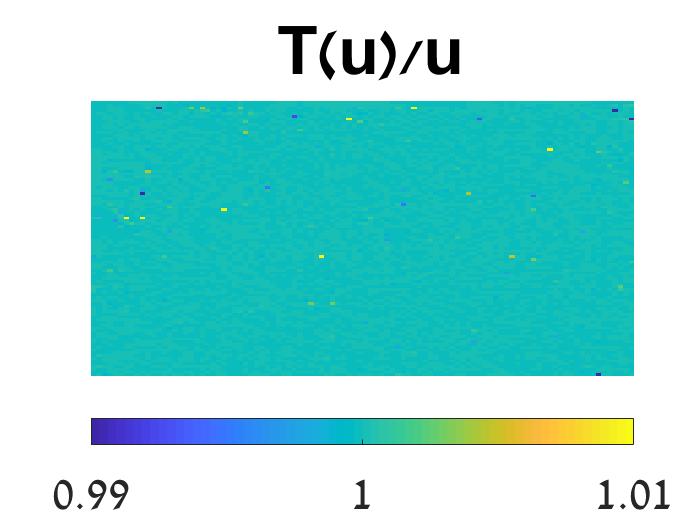}
    \caption{Constant Values of $\frac{T(u)}{u}$}
\end{subfigure}
\caption{TV texture generator eigenfunction $u$, with $\lambda=1$. We validate this is an eigenfunction following Propositions \ref{prop3}-\ref{prop4}. Also, $\frac{T(u)}{u}=C$ ($99.9\%$ of values are in the displayed range).}
\label{Fig::TV_noise_gen}
\end{figure*}

\subsection{EPLL Denoising Operator}\label{Sec::EPLL_res}
We show experimental results for the black-box generic EPLL (Expected Patch Log Likelihood) denoiser~\cite{zoran2011learning}. EPLL uses a generic framework for efficient image restoration, using a prior $p$ on image patches (selected using the operator $P_i$), formulated as the following optimization problem:
\begin{equation}\label{Eq::EPLL}
\begin{split}
    \underset{u}{\text{min}} \ \frac{\eta}{2}\|f-u\|^2 - EPLL_p(u), \\ EPLL_p(u) = \sum_{i} log \; p(P_i u)
\end{split}
\end{equation}
To increase performance, the paper suggests using the simple Gaussian Mixture Model (GMM) prior, learned from a set of natural images: $log \; p(x) = log \; \Bigg(\sum_{k=1}^{K} \pi_k \; N(x \;| \;\mu_k, \; \Sigma_k) \Bigg)$, where $K$, $\pi_k$, $\mu_k$, $\Sigma_k$ are the number of Gaussian mixtures, their mixing weights, their means and their covariance matrices, respectively. EPLL was shown to prefer large structures, straight borders and round corners~\cite{shaham2016visualizing}.

We first apply our method to the EPLL denoiser itself to generate a large eigenfunction. Note that we drop the output clipping stage within EPLL to avoid evolution in the linear region of the operator.
We first test (Fig. \ref{Fig::contraction_EPLL}) whether EPLL behaves as a contraction operator. Again, a weaker but sufficient Lipschitz condition holds: $\lim_{k\to\infty} \prod_{i=1}^{k} L_i=0$. Thus, the process converges, and converges into an eigenfunction (Propositions \ref{prop_Lipschitz}, \ref{prop1}).

Fig. \ref{Fig::EPLL}
shows the power iteration evolution of two different initial images to the final eigenfunctions. Note that the eigenvalues are smaller than but very close to 1. This is as expected from a detail-attenuating operator, and is in accordance with Corollary \ref{cor_lam}. Eigenfunctions are different, as each represents the coarse structure of a different initial image, and their shapes are in accordance with the observations made in~\cite{shaham2016visualizing}. We also validate these are eigenfunctions, where we see a similar behavior for both eigenpairs. Fig. \ref{Fig::better_removal}
compares stable and unstable modes of the denoiser to natural images. It shows a known eigenfunction behavior: using the corresponding denoiser, noise is better removed from the large eigenfunctions (stable modes), than from natural images, and from small eigenfunctions (unstable modes). 

\begin{figure*}[t!]
\captionsetup[subfigure]{justification=centering}
\centering
\begin{subfigure}{0.155\textwidth}
\centering
    \includegraphics[width=0.9\textwidth]{Figs/tiger.jpg}
    \caption{Initial image}
\end{subfigure}
\begin{subfigure}{0.155\textwidth}
\centering
    \includegraphics[width=0.9\textwidth]{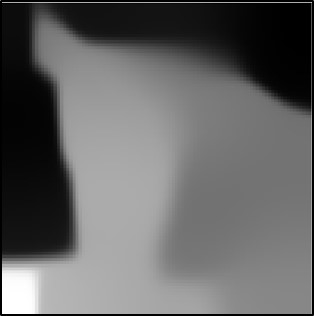}
    \caption{50 iterations}
\end{subfigure}
\begin{subfigure}{0.155\textwidth}
\centering
    \includegraphics[width=0.9\textwidth]{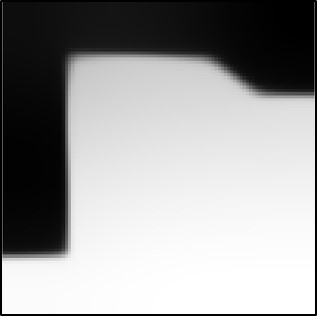}
    \caption{200 iterations}
\end{subfigure}
\begin{subfigure}{0.155\textwidth}
\centering
    \includegraphics[width=0.9\textwidth]{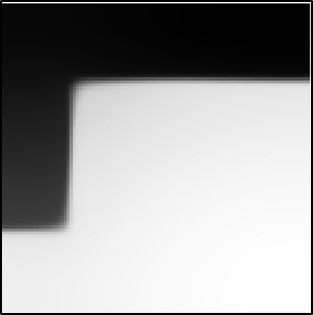}
    \caption{750 iterations}
\end{subfigure}
\begin{subfigure}{0.155\textwidth}
\centering
    \includegraphics[width=0.9\textwidth]{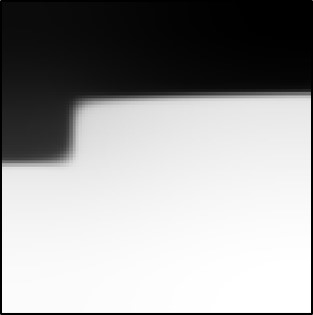}
    \caption{2000 iterations}
\end{subfigure}
\begin{subfigure}{0.155\textwidth}
\centering
    \includegraphics[width=0.9\textwidth]{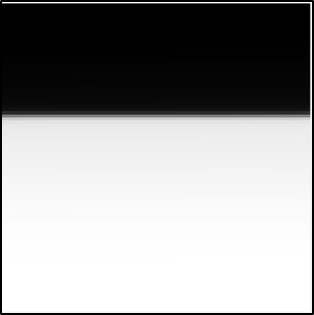}
    \caption{eigenfunction}
\end{subfigure}
\begin{subfigure}{0.24\textwidth}
\centering
    \includegraphics[height=2cm]{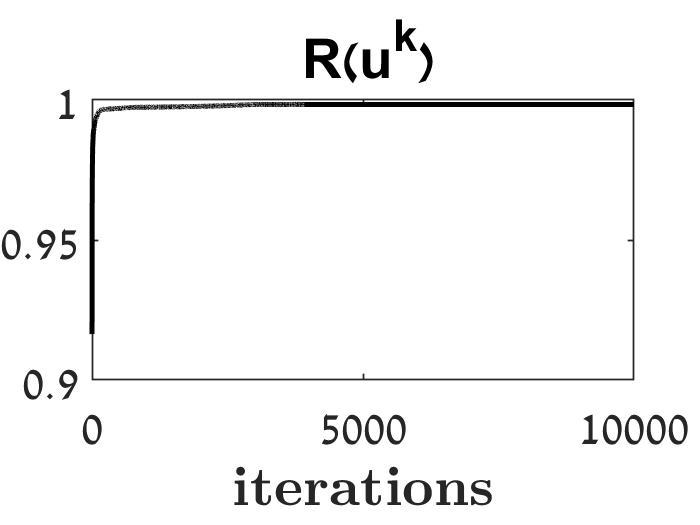}
    \caption{Rayleigh quotient $\rightarrow \lambda$}
\end{subfigure}
\begin{subfigure}{0.24\textwidth}
\centering
    \includegraphics[height=2cm]{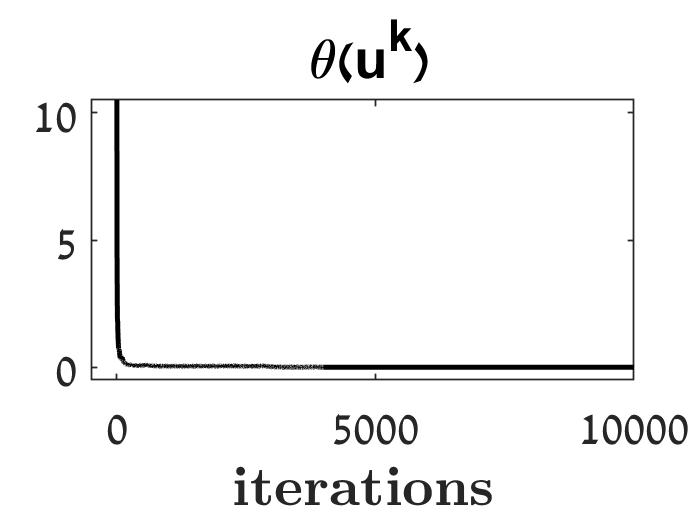}
    \caption{$\theta$ between $u^k, T(u^k) \rightarrow 0^{\circ}$}
\end{subfigure}
\begin{subfigure}{0.25\textwidth}
\centering
    \includegraphics[height=2cm]{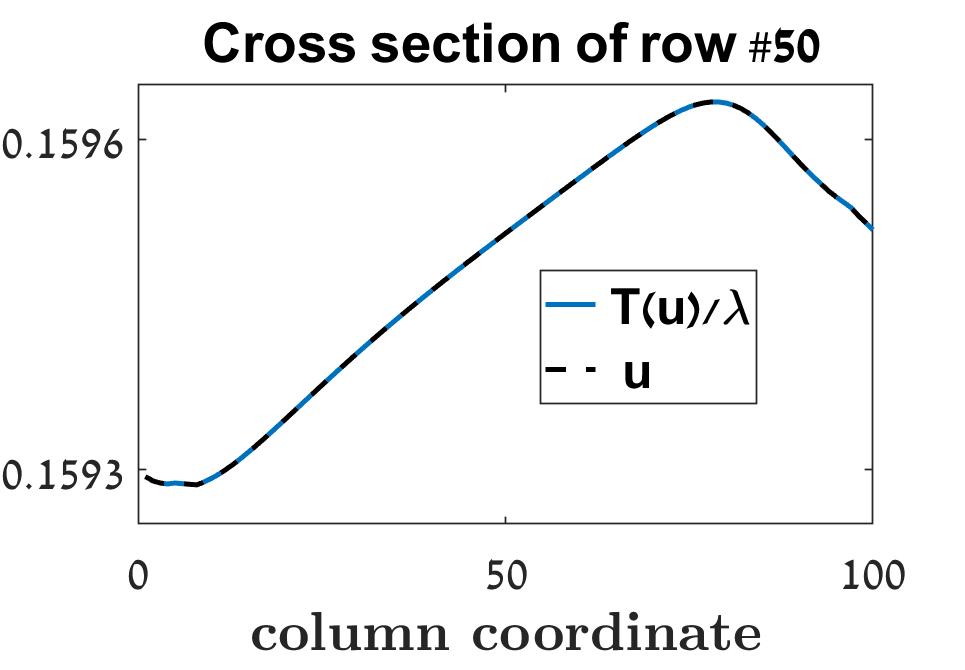}
    \caption{$\frac{T(u)}{\lambda}, u$: Identical cross sections}
\end{subfigure}
\begin{subfigure}{0.24\textwidth}
\centering
    \includegraphics[height=2cm]{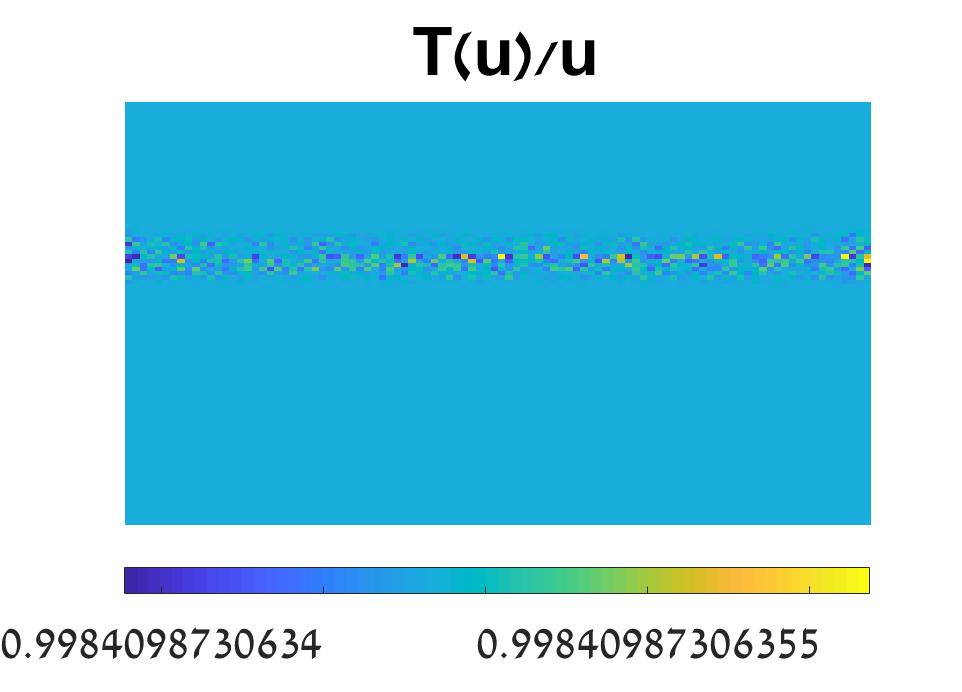}
    \caption{Constant Values of $\frac{T(u)}{u}$}
\end{subfigure}
\begin{subfigure}{0.18\textwidth}
\centering
    \includegraphics[width=0.7\textwidth]{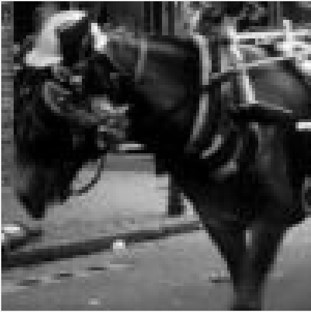}
    \caption{Initial image}
\end{subfigure}
\begin{subfigure}{0.18\textwidth}
\centering
    \includegraphics[width=0.7\textwidth]{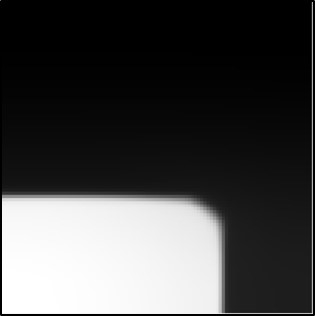}
    \caption{eigenfunction}
\end{subfigure}
\begin{subfigure}{0.2\textwidth}
\centering
    \includegraphics[height=2.1cm]{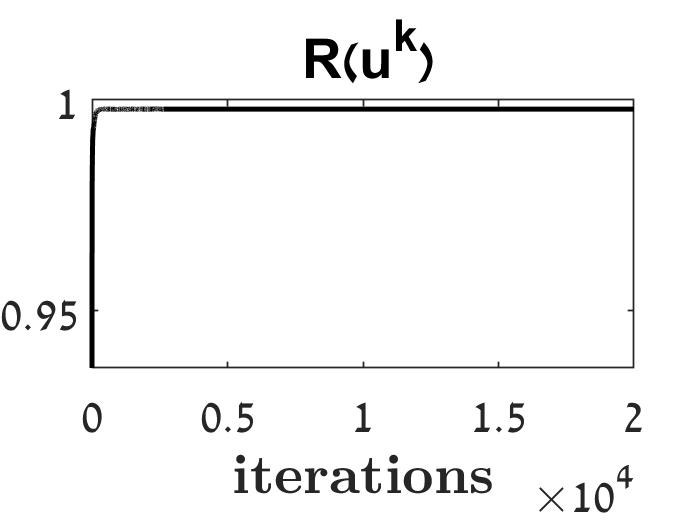}
    \caption{Rayleigh quotient $\rightarrow \lambda$}
\end{subfigure}
\begin{subfigure}{0.2\textwidth}
\centering
    \includegraphics[height=2.1cm]{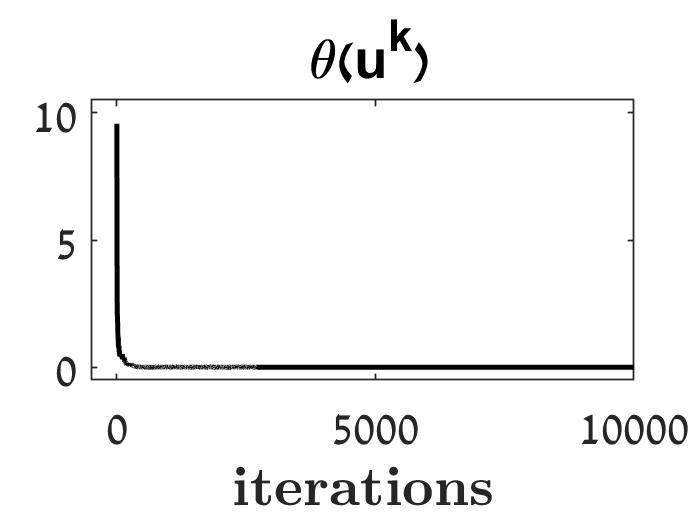}
    \caption{$\theta$ between $u^k, T(u^k) \rightarrow 0^{\circ}$}
\end{subfigure}
\begin{subfigure}{0.2\textwidth}
\centering
    \includegraphics[height=2.1cm]{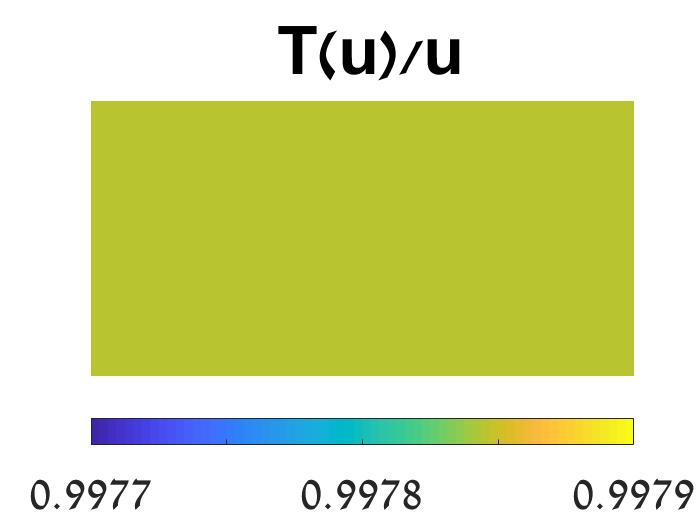}
    \caption{Constant Values of $\frac{T(u)}{u}$}
\end{subfigure}
\caption{EPLL power iteration evolutions to final eigenfunctions $u$. Top rows: 10,000 iterations of tiger image, $\lambda=0.9984$. Bottom row: 20,000 iterations of horse image, $\lambda=0.9978$. We validate these are eigenfunctions following Propositions \ref{prop3}-\ref{prop4}. Also, cross sections of $\frac{T(u)}{\lambda}, u$ are identical, and $\frac{T(u)}{u}=C$ (all values are in the displayed ranges).}
\label{Fig::EPLL}
\end{figure*}

\begin{figure*}
\captionsetup[subfigure]{justification=centering}
\centering
\begin{subfigure}{0.15\textwidth}
\centering
    \includegraphics[width=0.9\textwidth]{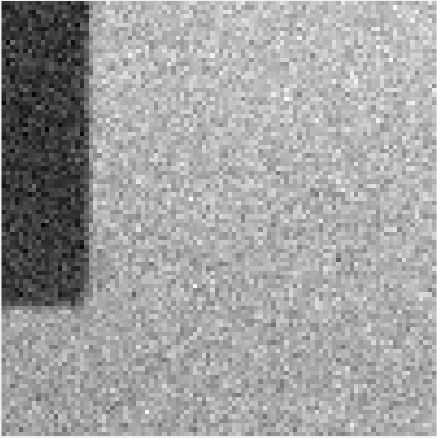}
    \caption{Noisy eigenfunction}
\end{subfigure}
\begin{subfigure}{0.15\textwidth}
\centering
    \includegraphics[width=0.9\textwidth]{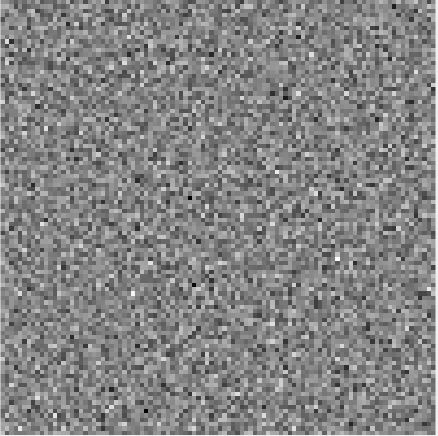}
    \caption{Noise removed from (a)}
\end{subfigure}
\begin{subfigure}{0.15\textwidth}
\centering
    \includegraphics[width=0.9\textwidth]{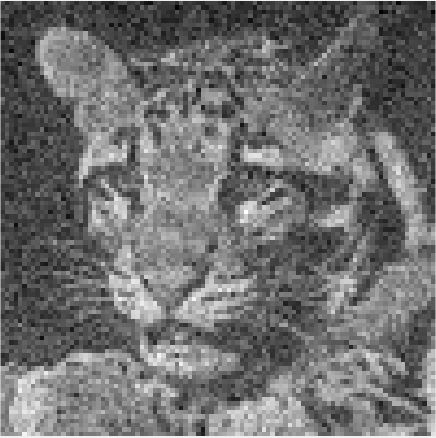}
    \caption{Noisy natural image}
\end{subfigure}
\begin{subfigure}{0.15\textwidth}
\centering
    \includegraphics[width=0.9\textwidth]{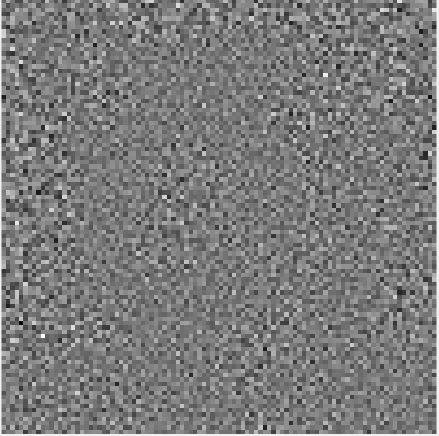}
    \caption{Noise removed from (c)}
\end{subfigure}
\begin{subfigure}{0.15\textwidth}
\centering
    \includegraphics[width=0.9\textwidth]{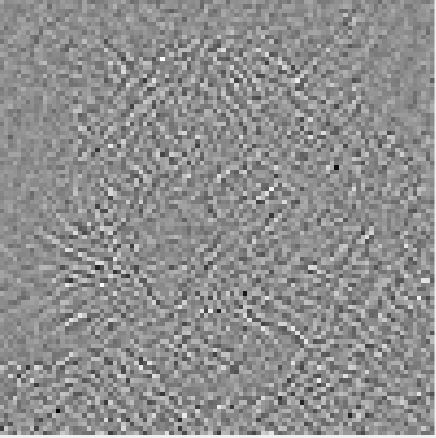}
    \caption{Noisy eigenfunction}
\end{subfigure}
\begin{subfigure}{0.15\textwidth}
\centering
    \includegraphics[width=0.9\textwidth]{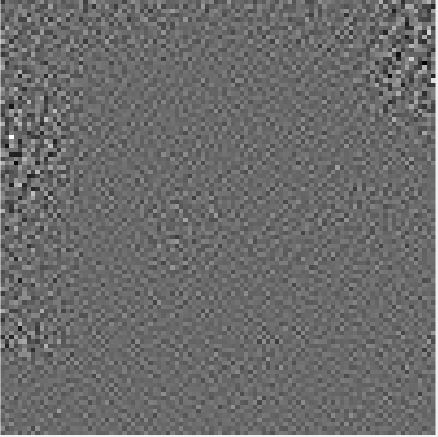}
    \caption{Noise removed from (e)}
\end{subfigure}
\\
\begin{subfigure}{0.45\textwidth}
\centering
    \includegraphics[width=0.8\textwidth]{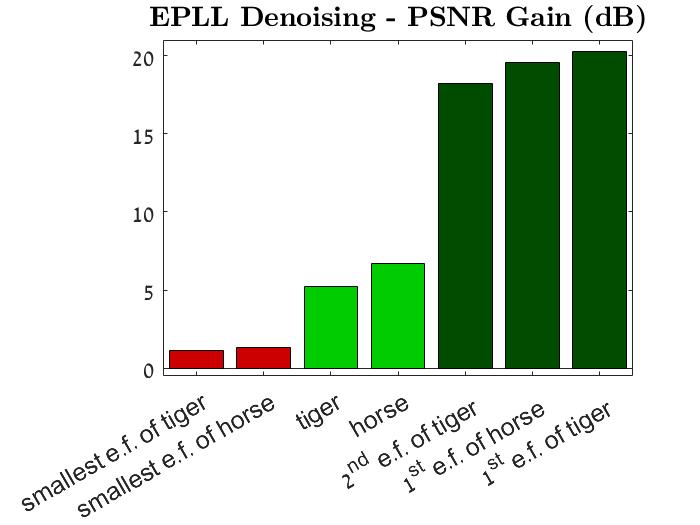}\caption{PSNR gain: eigenfunctions vs. natural images, $var_{noise}=\frac{1}{5}var_{img}$}
\end{subfigure}
\begin{subfigure}{0.45\textwidth}
\centering
\includegraphics[width=0.8\textwidth]{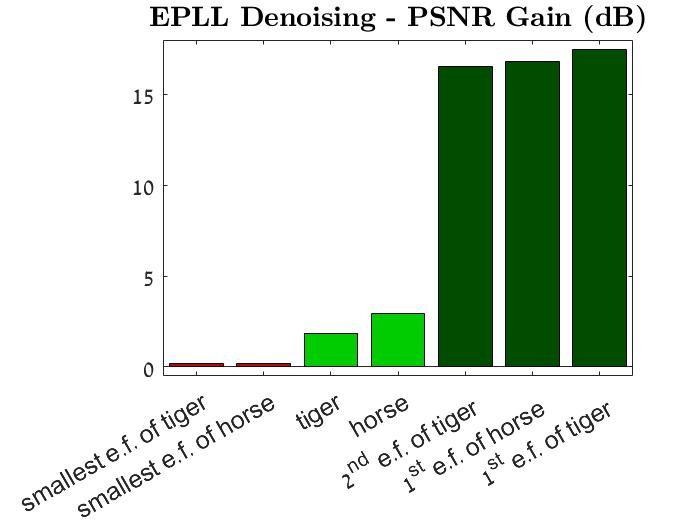}\caption{PSNR gain: eigenfunctions vs. natural images, $var_{noise}=\frac{1}{75}var_{img}$}
\end{subfigure}
\begin{subfigure}{0.133\textwidth}
\centering
    \includegraphics[width=0.8\textwidth]{Figs/EPLL_10000it.jpg}
    \caption{$1^{st}$ e.f. \\of tiger}
\end{subfigure}
\begin{subfigure}{0.133\textwidth}
\centering
    \includegraphics[width=0.8\textwidth]{Figs/EPLL_horse_20000it.jpg}
    \caption{$1^{st}$ e.f. of horse}
\end{subfigure}
\begin{subfigure}{0.133\textwidth}
\centering
    \includegraphics[width=0.8\textwidth]{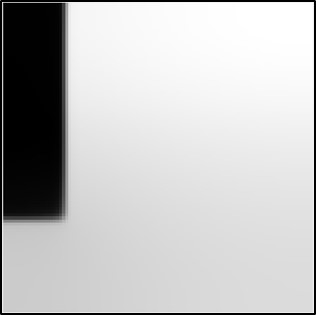}
    \caption{$2^{nd}$ e.f. of tiger}
\end{subfigure}
\begin{subfigure}{0.12\textwidth}
\centering
    \includegraphics[width=0.9\textwidth]{Figs/horse.jpg}
    \caption{\\horse}
\end{subfigure}
\begin{subfigure}{0.12\textwidth}
\centering
    \includegraphics[width=0.9\textwidth]{Figs/tiger.jpg}
    \caption{\\tiger}
\end{subfigure}
\begin{subfigure}{0.145\textwidth}
\centering
    \includegraphics[width=0.7\textwidth]{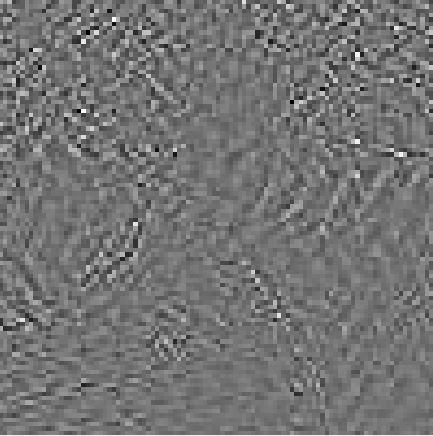}
    \caption{small e.f. of horse}
\end{subfigure}
\begin{subfigure}{0.16\textwidth}
\centering
    \includegraphics[width=0.65\textwidth]{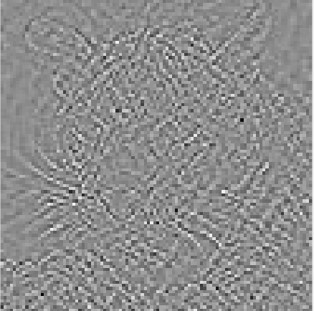}
    \caption{small e.f. \\of tiger}
\end{subfigure}
\caption{Demonstration of stable vs. unstable modes for EPLL. We show a known eigenfunction property: a denoiser better denoises its large eigenfunctions (stable modes), than natural images, than its small eigenfunctions (unstable modes). When noising with $var_{noise}=\frac{1}{5}var_{img}$ and denoising with EPLL, more noise is removed from the $2^{nd}$ eigenfunction (b), than from a natural image (d), than from the small eigenfunction (f), and it is more uniform. (g-h): PSNR gain when denoising using EPLL for different noise levels and different images (i-o): eigenfunctions vs. natural images.}
\label{Fig::better_removal}
\end{figure*}

Fig. \ref{Fig::decay_EPLL1}-\ref{Fig::decay_EPLL2} 
shows the distinct pattern of decay profiles for $97\%$ of pixels. Obviously,
as opposed to TV, EPLL has no decay profile theory to compare to. 
Following Sec. \ref{sec::more_ef}, we generate the second large eigenfunction (Fig. \ref{Fig::EPLL_2nd}), 
orthogonal to the large eigenfunction. However, the process keeps evolving and thus does not hold \eqref{eq:EV}. Thus, it can only be considered as a \textbf{pseudo}-eigenfunction (see~\cite{nossek2018flows}).
\off{However, we can generate a "processed" eigenfunction, which does hold $T(u)=\lambda u$, by applying more no-projection power iterations, at the expense of orthogonality. 
This agrees with the theory developed in recent years for TV and one-homogeneous functionals in general~\cite{benning2013,burger2016spectral,bungert2019nonlinear}, where the eigenfunctions are generally not orthogonal to each other.}
Finally, Fig. \ref{Fig::EPLL_noise_gen} 
shows the large eigenfunction of the EPLL texture generator, with an eigenvalue very close to 1. This is also a small eigenfunction of the EPLL denoiser, following Property \ref{prop_large_small} (it is easy to validate that Property \ref{prop_ortho_op} holds), and indeed, it represents the texture of the initial image. We also validate this is an eigenfunction. 
\begin{figure*}
\captionsetup[subfigure]{justification=centering}
\centering
\begin{subfigure}{0.18\textwidth}
\centering
\includegraphics[width=0.8\textwidth]{Figs/tiger.jpg}
\caption{Initial image}
\end{subfigure}
\begin{subfigure}{0.18\textwidth}
\centering
\includegraphics[width=0.8\textwidth]{Figs/EPLL_2nd_23000it.jpg}
\caption{Eigenfunction}
\end{subfigure}
\begin{subfigure}{0.2\textwidth}
\centering
\includegraphics[height=2.1cm]{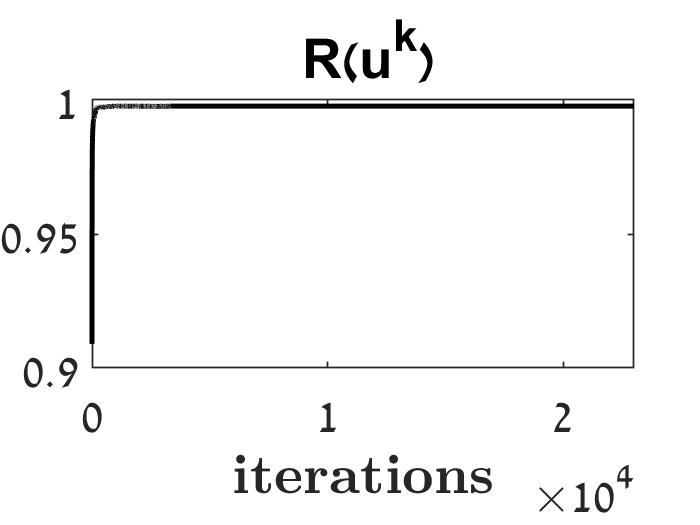}
\caption{Rayleigh quotient $\rightarrow \lambda$}
\end{subfigure}
\begin{subfigure}{0.2\textwidth}
\centering
\includegraphics[height=2.1cm]{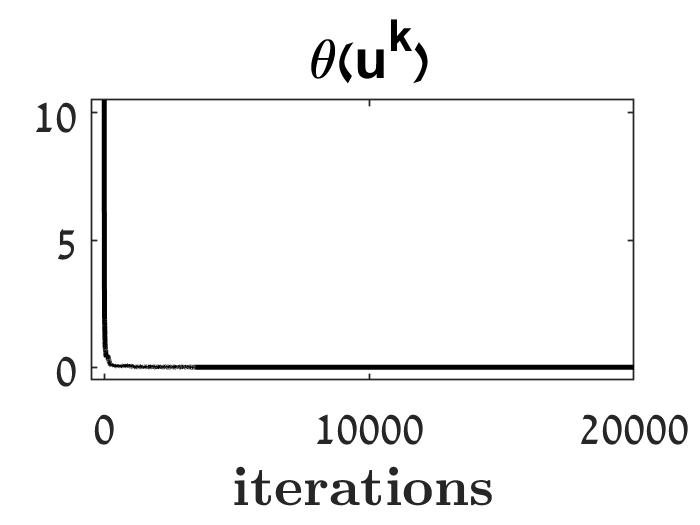}
\caption{$\theta$ between $u^k, T(u^k) \rightarrow 0^{\circ}$}
\end{subfigure}
\begin{subfigure}{0.2\textwidth}
\centering
\includegraphics[height=2.1cm]{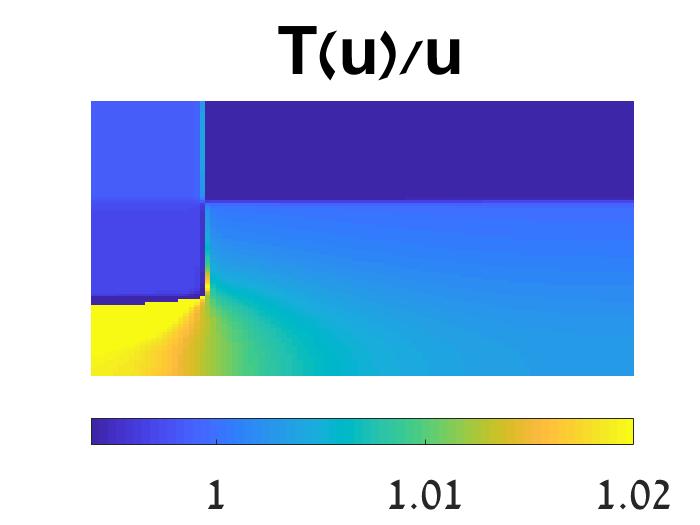}
\caption{\textbf{Inconstant} Values of $\frac{T(u)}{u}$}
\end{subfigure}
\caption{EPLL $2^{nd}$ large eigenfunction $u$, with $\lambda=0.9977$. We validate that Propositions \ref{prop3}-\ref{prop4} hold. However, as $\frac{T(u)}{u}=C$ does not hold, this is only a \textbf{pseudo}-eigenfunction.}
\label{Fig::EPLL_2nd}
\end{figure*}

\begin{figure*}
\captionsetup[subfigure]{justification=centering}
\centering
\begin{subfigure}{0.18\textwidth}
\centering
\includegraphics[width=0.8\textwidth]{Figs/tiger.jpg}
\caption{Initial image}
\end{subfigure}
\begin{subfigure}{0.18\textwidth}
\centering
\includegraphics[width=0.8\textwidth]{Figs/EPLL_noise_gen_20750it.jpg}
\caption{Eigenfunction}
\end{subfigure}
\begin{subfigure}{0.2\textwidth}
\centering
\includegraphics[height=2.1cm]{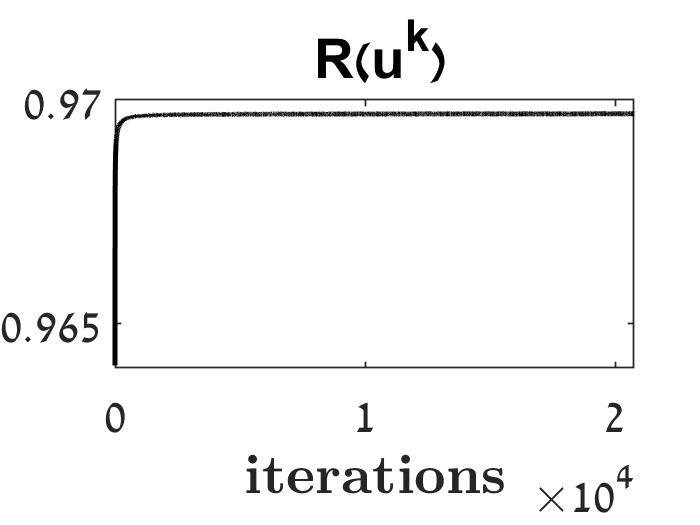}
\caption{Rayleigh quotient $\rightarrow \lambda$}
\end{subfigure}
\begin{subfigure}{0.2\textwidth}
\centering
\includegraphics[height=2.1cm]{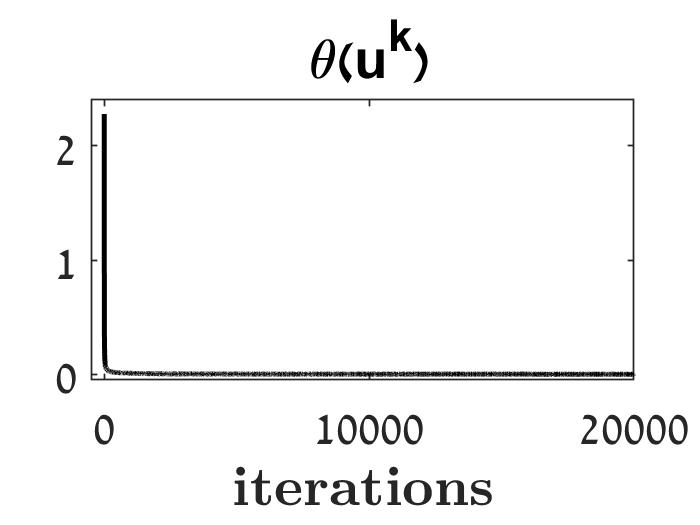}
\caption{$\theta$ between $u^k, T(u^k) \rightarrow 0^{\circ}$}
\end{subfigure}
\begin{subfigure}{0.2\textwidth}
\centering
\includegraphics[height=2.1cm]{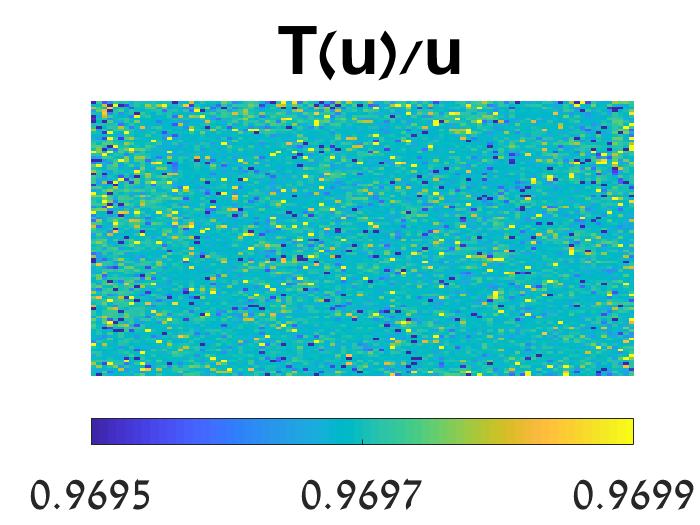}
\caption{Constant Values of $\frac{T(u)}{u}$}
\end{subfigure}
\caption{EPLL texture generator eigenfunction $u$, with $\lambda=0.9697$. We validate this is an eigenfunction following Propositions \ref{prop3}-\ref{prop4}. Also, $\frac{T(u)}{u}=C$ ($96\%$ of values are in the displayed range).}
\label{Fig::EPLL_noise_gen}
\end{figure*}

Fig. \ref{Fig::degradation} 
demonstrates eigenfunction robustness to various small degradations. The Rayleigh quotient of a degraded eigenfunction is similar to that of a "clean" one, and power iterations make it converge to the critical point in the vicinity - the original eigenvalue. Especially note the "small message" robustness property, that will be used for the decryption-encryption application (Sec. \ref{sec::app}). Note that noise robustness holds in a very wide sense. For example, the denoiser considers textures and fine details as noise, which can be removed. The texture generator, on the other hand, prefers noise and textures, and thus considers coarse structures as noise. The decay profiles of a degraded eigenfunction also exhibit a distinct pattern (Fig \ref{Fig::decay_noise}), similar to those of the "clean" eigenfunction, but sometimes distorted in the beginning. 

\begin{figure*}
\begin{tabular}{c}
\rotatebox{90}{\hspace{1cm}{degraded}}\\
\rotatebox{90}{\hspace{1cm}{corrected}}\\
\rotatebox{90}{\hspace{0.3cm}{difference}}
\end{tabular}
\captionsetup[subfigure]{justification=centering}
\begin{subfigure}{0.15\textwidth}
\centering
\begin{tabular}{c}
\includegraphics[width=0.9\textwidth]{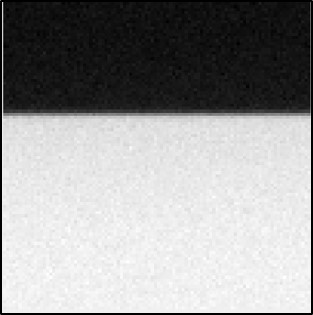} \\
\includegraphics[width=0.9\textwidth]{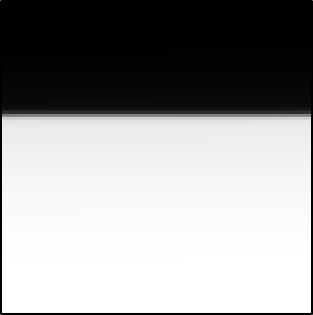} \\
\includegraphics[width=0.9\textwidth]{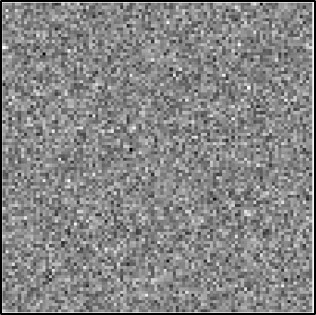}
\end{tabular}
\caption{\\Gaussian noise}
\end{subfigure}
\begin{subfigure}{0.15\textwidth}
\centering
\begin{tabular}{c}
\includegraphics[width=0.9\textwidth]{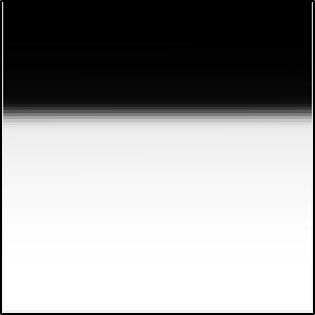} \\
\includegraphics[width=0.9\textwidth]{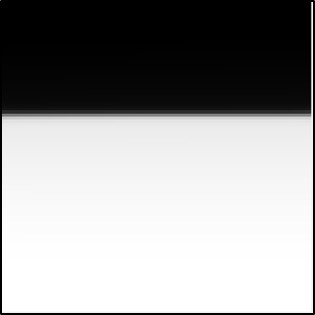} \\
\includegraphics[width=0.9\textwidth]{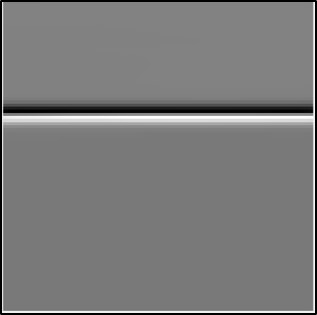}
\end{tabular}
\caption{\\Gaussian blur}
\end{subfigure}
\begin{subfigure}{0.15\textwidth}
\centering
\begin{tabular}{c}
\includegraphics[width=0.9\textwidth]{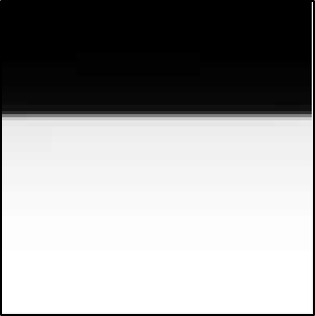} \\
\includegraphics[width=0.9\textwidth]{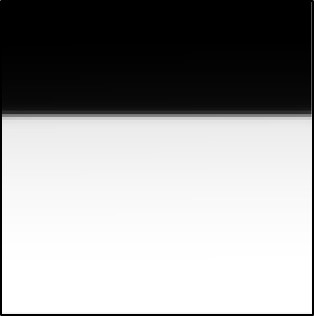} \\
\includegraphics[width=0.9\textwidth]{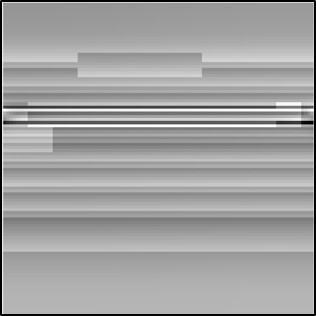}
\end{tabular}
\caption{JPEG compression}
\end{subfigure}
\begin{subfigure}{0.15\textwidth}
\centering
\begin{tabular}{c}
\includegraphics[width=0.9\textwidth]{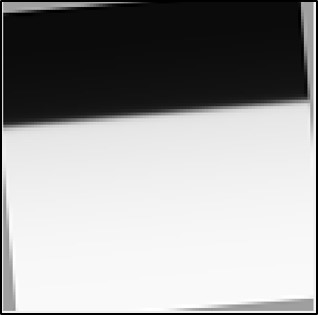} \\
\includegraphics[width=0.9\textwidth]{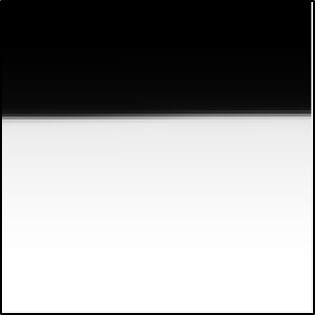} \\
\includegraphics[width=0.9\textwidth]{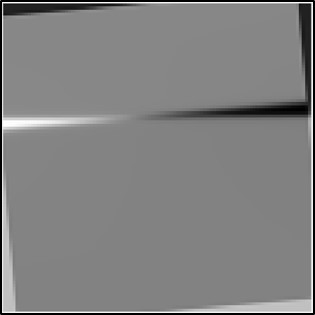}
\end{tabular}
\caption{\\Rotation, $5^{\circ}$}
\end{subfigure}
\begin{subfigure}{0.15\textwidth}
\centering
\begin{tabular}{c}
\includegraphics[width=0.9\textwidth]{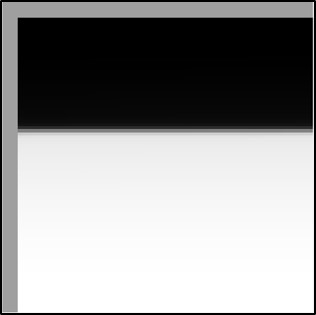} \\
\includegraphics[width=0.9\textwidth]{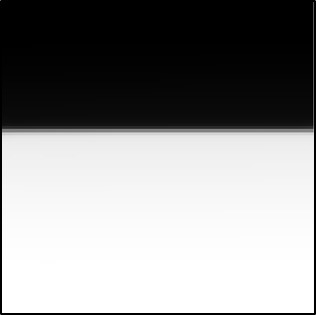} \\
\includegraphics[width=0.9\textwidth]{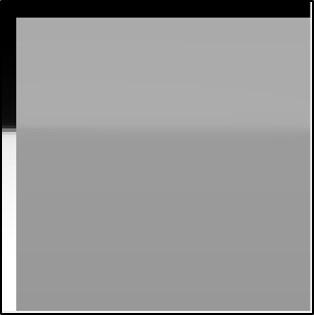}
\end{tabular}
\caption{\\Shift, $5X5$ pixels}
\end{subfigure}
\begin{subfigure}{0.15\textwidth}
\centering
\begin{tabular}{c}
\includegraphics[width=0.9\textwidth]{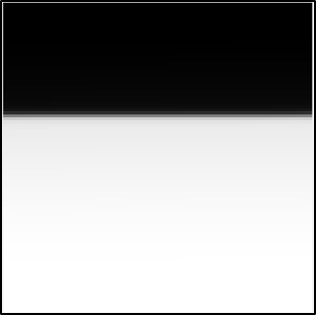} \\
\includegraphics[width=0.9\textwidth]{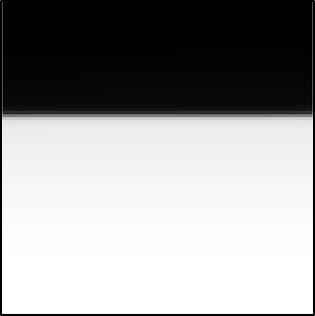} \\
\includegraphics[width=0.9\textwidth]{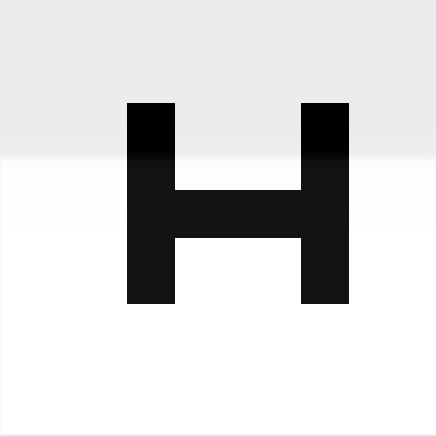}
\end{tabular}
\caption{Small message added}
\label{Fig::added_message}
\end{subfigure}
\begin{subfigure}{0.24\textwidth}
\centering
\includegraphics[width=0.6\textwidth]{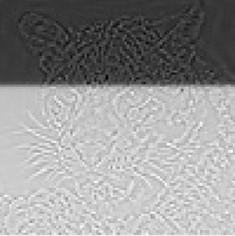}
\caption{Large + small \\eigenfunctions}
\end{subfigure}
\begin{subfigure}{0.24\textwidth}
\centering
\includegraphics[width=0.6\textwidth]{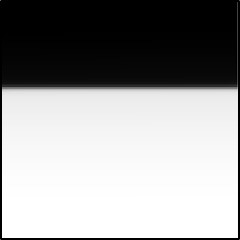}
\caption{After small eigenfunction is removed from (g)}
\end{subfigure}
\begin{subfigure}{0.24\textwidth}
\centering
\includegraphics[width=0.6\textwidth]{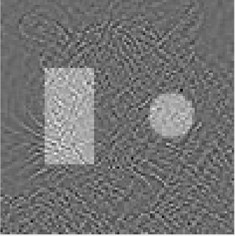}
\caption{Small eigenfunction + structure}
\end{subfigure}
\begin{subfigure}{0.24\textwidth}
\centering
\includegraphics[width=0.6\textwidth]{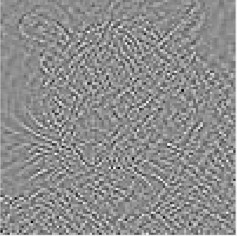}
\caption{After structure is removed\\from (i)}
\end{subfigure}
\caption{(a)-(f): Degradation robustness of EPLL large eigenfunction for various small degradations. Top: degraded eigenfunctions (e.g. noise of $\sigma=0.01$, blur of $\sigma=0.01$, see different columns). Middle: corrected eigenfunctions after applying 500 power iterations. Bottom: Difference images. (g)-(h): A small eigenfunction, added to the large, is considered by the denoiser as "noise", and thus removed. (i)-(j): A structure, added to the small eigenfunction, is considered by the texture generator as "noise", and thus removed.}
\label{Fig::degradation}
\end{figure*}

\subsection{Application: Encryption-Decryption Scheme}\label{sec::app}
We suggest an encryption-decryption scheme and demonstrate it for the EPLL denoiser. We exploit eigenfunction robustness to adding a "small message" (Fig. \ref{Fig::added_message}), which can be removed by applying power iterations. Then, the difference image will reveal the message, which may be too small to be detected in the original image. We also exploit the strong impact of parameters chosen to generate the eigenfunction (Sec. \ref{Sec::method}), which can be decided by the sender and secretly shared with the receiver. However, as they are unknown to enemies, decryption is practically impossible for them.

Fig. \ref{Fig::app_flow} 
shows the three components of the scheme. First, the sender decides on different impacting factors, generates the eigenfunction, and adds the secret hidden message. Second, the receiver uses the secret impacting factors to apply power iterations to remove the message. Some simple post-processing may also be needed. Third, enemies try to decrypt the hidden message using the power iteration with unknown factors. More specifically, the prior model for EPLL, GMM, offers numerous options of covariance matrices, mixing weights and component numbers, which are practically impossible to guess. Thus the enemies fail, and the message remains hidden. Fig. \ref{Fig::app_examples} 
shows several examples of the application.

\begin{figure*}
\captionsetup[subfigure]{justification=centering}
\centering
\includegraphics[width=0.9\textwidth]{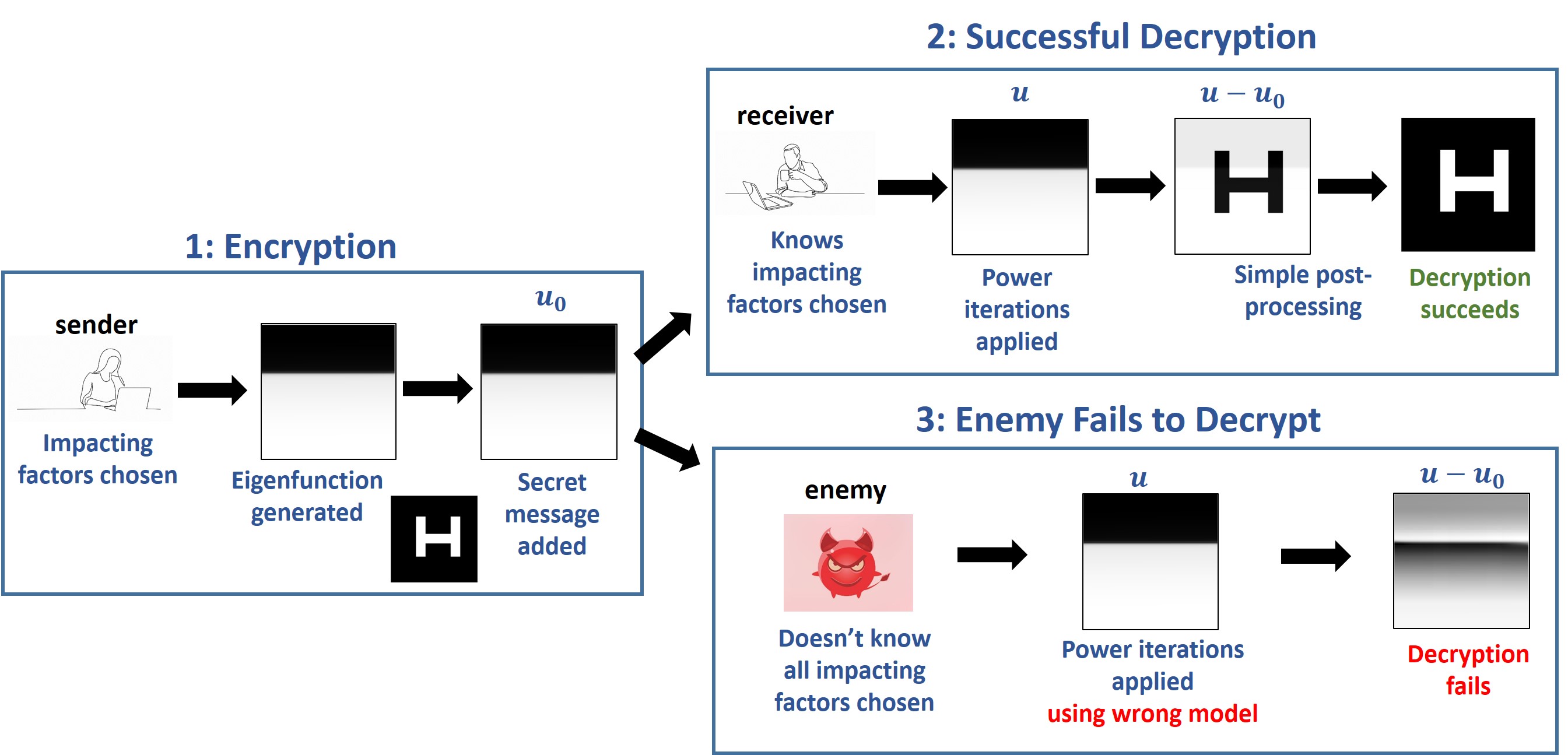}
\caption{Flow chart: encryption-decryption application. The receiver applies power iterations with secret impacting factors to decrypt the message. However, enemies apply power iterations with unknown factors and thus fail to decrypt.}
\label{Fig::app_flow}
\end{figure*}

\begin{figure*}
\captionsetup[subfigure]{justification=centering}
\centering
\begin{subfigure}{0.18\textwidth}
\centering
\includegraphics[width=0.8\textwidth]{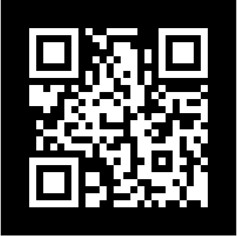}
\caption{Secret message to be added}
\end{subfigure}
\begin{subfigure}{0.18\textwidth}
\centering
\includegraphics[width=0.8\textwidth]{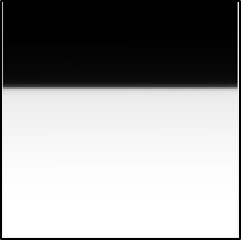}
\caption{Large eigenfunction + message, $u_0$}
\end{subfigure}
\begin{subfigure}{0.18\textwidth}
\centering
\includegraphics[width=0.8\textwidth]{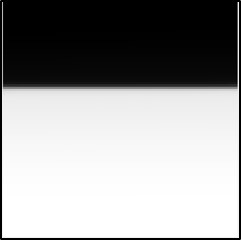}
\caption{Power iterations applied, $u$}
\end{subfigure}
\begin{subfigure}{0.18\textwidth}
\centering
\includegraphics[width=0.8\textwidth]{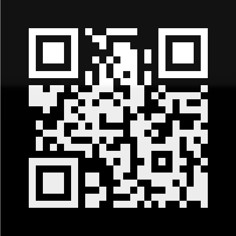}
\caption{\\Difference image, $u-u_0$}
\end{subfigure}
\\
\begin{subfigure}{0.18\textwidth}
\centering
\includegraphics[width=0.8\textwidth]{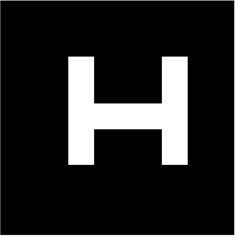}
\caption{Secret message to be added}
\end{subfigure}
\begin{subfigure}{0.18\textwidth}
\centering
\includegraphics[width=0.8\textwidth]{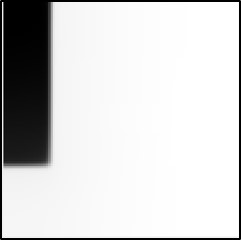}
\caption{$2^{nd}$ eigenfunction + message, $u_0$}
\end{subfigure}
\begin{subfigure}{0.18\textwidth}
\centering
\includegraphics[width=0.8\textwidth]{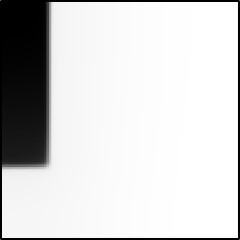}
\caption{Power iterations applied, $u$}
\end{subfigure}
\begin{subfigure}{0.18\textwidth}
\centering
\includegraphics[width=0.8\textwidth]{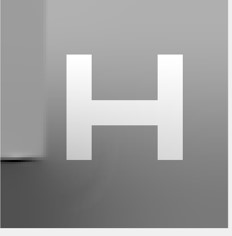}
\caption{\\Difference image, $u-u_0$}
\end{subfigure}
\begin{subfigure}{0.18\textwidth}
\centering
\includegraphics[width=0.8\textwidth]{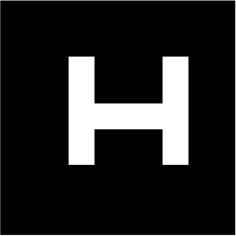}
\caption{Post-processing of $u-u_0$}
\end{subfigure}
\begin{subfigure}{0.18\textwidth}
\centering
\includegraphics[width=0.8\textwidth]{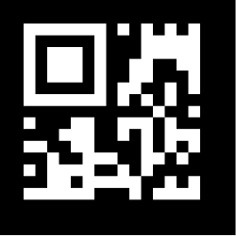}
\caption{Secret message to be added}
\end{subfigure}
\begin{subfigure}{0.18\textwidth}
\centering
\includegraphics[width=0.8\textwidth]{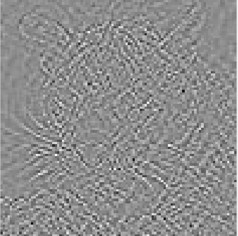}
\caption{Small eigenfunction + message, $u_0$}
\end{subfigure}
\begin{subfigure}{0.18\textwidth}
\centering
\includegraphics[width=0.8\textwidth]{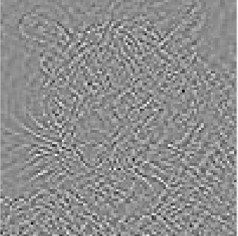}
\caption{Power iterations applied, $u$}
\end{subfigure}
\begin{subfigure}{0.18\textwidth}
\centering
\includegraphics[width=0.8\textwidth]{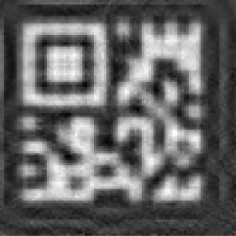}
\caption{\\Difference image, $u-u_0$}
\end{subfigure}
\begin{subfigure}{0.18\textwidth}
\centering
\includegraphics[width=0.8\textwidth]{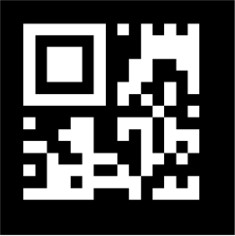}
\caption{Post-processing of $u-u_0$}
\end{subfigure}
\caption{Examples of the encryption-decryption scheme for different messages and EPLL eigenfunctions. For each row, from left to right: a secret message; an eigenfunction with message added ($u_0$); after (500) power iterations applied ($u$); and their difference ($u-u_0$). Row 2: we use a "processed" eigenfunction (see Sec. \ref{sec::more_ef}) and perform simple segmentation as post-processing. Row 3: we perform the following post-processing: locating the large square template of the message in the result, using a correlation matrix, then thresholding the central pixel of each image cell to determine its value.}
\label{Fig::app_examples}
\end{figure*}

\subsection{Deblurring Operators}
To the best of our knowledge, this is the first attempt to find eigenpairs for non-convex deblurring operators (but see ~\cite{schmidt2018inverse},~\cite{bungert2019solution}). Lacking previous theory, we use knowledge of sharpening operators and specific priors and our analysis in Sec. \ref{Sec::theory} for analysis. The possible meaning in this context of a complementary operator, small eigenfunction, or equivalence to decay profiles, remains for now an open question. 

Both deblurring operators examined, TV and EPLL, can be formulated as optimization problems similar to Eq. \ref{Eq::TV}, \ref{Eq::EPLL}, respectively, with an adapted fidelity term: $\|f-Au\|^2$. We also note that we slightly update the power iteration scheme: mean is kept as $\overline{f^0}$ for $f^0$ and throughout evolution. 

Fig. \ref{Fig::deblur} shows results for both operators. We show different large eigenfunctions of the EPLL deblurring operator, generated using different motion blur kernels. Note that the straight but slightly rounded shapes of eigenfunctions correspond to the different kernel shapes, and are in accordance with the observations made in~\cite{shaham2016visualizing}, regarding the behavior of the EPLL denoiser. We also show a large eigenfunction of the TV deblurring operator, generated using a Gaussian blur kernel. Note that the eigenfunction shape is in accordance with the convex nature of eigenfunctions of the TV denoiser~\cite{gilboa2018beyond}. For both operators, the eigenfunctions are textural objects - in accordance with the detail-enhancing nature of deblurring operators. Also, the resulting eigenvalues are larger than 1, as expected from detail-enhancing operators, and in accordance with Corollary \ref{cor_lam}.  

\begin{figure}
\captionsetup[subfigure]{justification=centering}
\centering
\begin{subfigure}{0.19\textwidth}
\centering
    \includegraphics[width=0.6\textwidth]{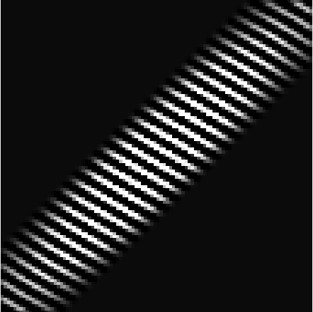}
    \caption{EPLL eigenfunction}
\end{subfigure}
\begin{subfigure}{0.19\textwidth}
\centering
    \includegraphics[width=0.6\textwidth]{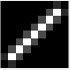}
    \caption{Blur kernel for (a)}
\end{subfigure}
\\
\begin{subfigure}{0.15\textwidth}
\centering
    \includegraphics[height=2cm]{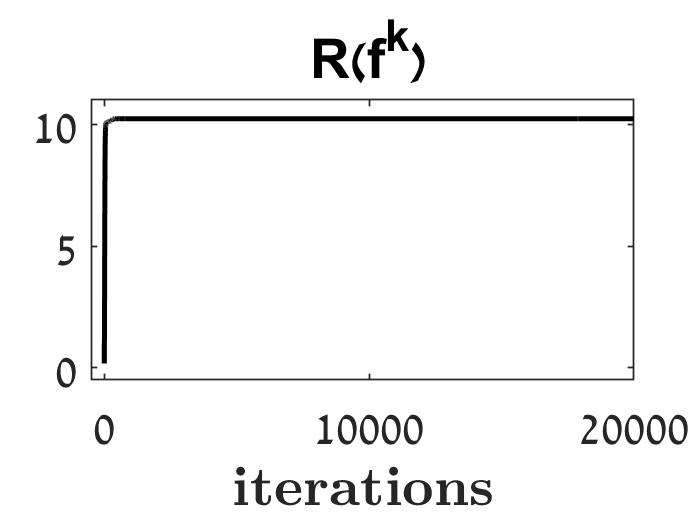}
    \caption{Rayleigh quotient $\rightarrow \lambda$}
\end{subfigure}
\begin{subfigure}{0.15\textwidth}
\centering
    \includegraphics[height=2cm]{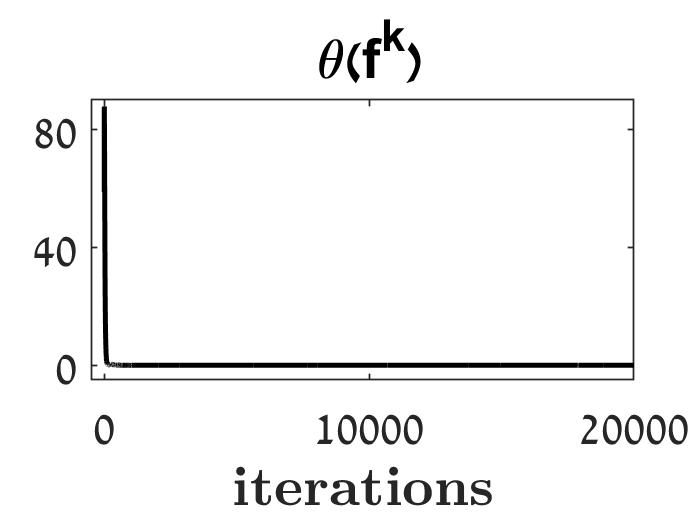}
    \caption{$\theta$ between $u^k, T(u^k) \rightarrow 0^{\circ}$}
\end{subfigure}
\begin{subfigure}{0.15\textwidth}
\centering
    \includegraphics[height=2cm]{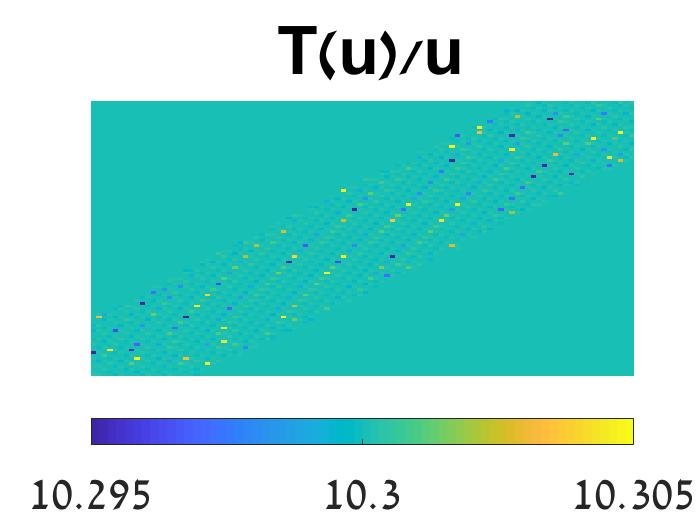}
    \caption{Constant Values of $\frac{T(u)}{u}$}
\end{subfigure}
\\
\begin{subfigure}{0.15\textwidth}
\centering
    \includegraphics[width=0.8\textwidth]{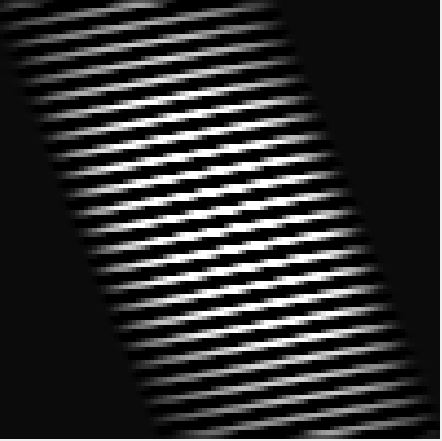}
    \caption{EPLL eigenfunction}
\end{subfigure}
\begin{subfigure}{0.15\textwidth}
\centering
    \includegraphics[width=0.37\textwidth]{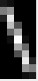}
    \caption{Blur kernel for (f)}
\end{subfigure}
\begin{subfigure}{0.15\textwidth}
\centering
    \includegraphics[width=0.8\textwidth]{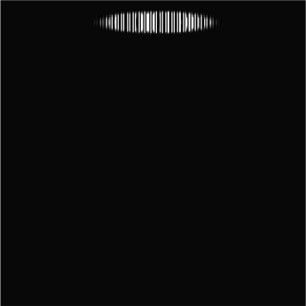}
    \caption{TV eigenfunction}
\end{subfigure}
\caption{Eigenfunctions of deblurring operators. For the EPLL deblurring operator: eigenfunction (a) with eigenvalue $\lambda=10.3001$ for motion kernel (b) (we validate this is an eigenfunction following Propositions \ref{prop3}-\ref{prop4}), eigenfunction (f) with eigenvalue $\lambda=10.3845$ for kernel (g). For the TV deblurring operator: eigenfunction (h) with $\lambda=16.0913$.}
\label{Fig::deblur}
\end{figure}

\section{Conclusion}\label{Sec::conc}
We suggest a generalized method for solving and analyzing generic eigenproblems by adapting the well established power iteration. We handle non-linear, black-box operators, induced by image processing algorithms, where no analytic operator is at hand. Such eigenproblems are very interesting, as they reveal the stable and unstable modes of the operator (its most- and least-suitable inputs). We show steady state properties of the process, as well as convergence for contraction operators (Lipschitz continuous). We also present analysis tools for validation. We demonstrate our method for two image denoisers: the well-known functional-induced total-variation, and the black-box EPLL (based on natural image statistics). We find eigenfunctions with large and small eigenvalues, and also several of them in an iterative process, and demonstrate their robustness to various degradations. Based on this insight we suggest an encryption-decryption application. Finally, we analyze eigenproblems for generic deblurring operators, apparently for the first time. 

\section*{Acknowledgment}
We acknowledge support by the Israel Science Foundation (grant No. 534/19) and the Technion Ollendorff Minerva Center.

\ifCLASSOPTIONcaptionsoff
  \newpage
\fi

\bibliographystyle{IEEEtran}
\bibliography{main}

\begin{IEEEbiographynophoto}{Ester Hait-Fraenkel} 
is pursuing her Ph.D. in Electrical Engineering in the Technion - Israel Institute of Technology, where she received her B.Sc. (Cum Laude) and M.Sc. in Electrical Engineering in 2014 and 2016, respectively. Her research interests include image processing and computer vision.
\end{IEEEbiographynophoto}


\begin{IEEEbiographynophoto}{Guy Gilboa}
received his Ph.D. from the Electrical Engineering Department, Technion - Israel Institute of Technology in 2004. He was a postdoctoral fellow with UCLA and had various development and research roles with Microsoft and Philips Healthcare. Since 2013 he is a faculty member with the Electrical Engineering Department, Technion - Israel Institute of Technology. He has authored some highly cited papers on topics such as image sharpening and denoising, nonlocal operators theory, and texture analysis. He received several prizes, including the Eshkol Prize by the Israeli Ministry of Science, the Vatat Scholarship, and the Gutwirth Prize. He serves at the editorial boards of the journals IEEE SPL, JMIV and CVIU.
\end{IEEEbiographynophoto}

\end{document}